\title{On realizability of lines on tropical cubic surfaces and the Brundu-Logar normal form}
\keywords{cubic surfaces, tropical lines, relative realizability, tropical geometry}
\author{Alheydis Geiger}
\address{Department of Mathematics, University of T\"{u}bingen, 
Germany\\
	\email{alheydis.geiger@math.uni-tuebingen.de}}
\newcommand{\conv}[1]{\text{conv}(#1)}
\newcommand{\trop}[1]{\text{trop}(#1)}
\newcommand{\val}{\text{val}}
\theoremstyle{plain}
\newtheorem*{theoremno}{Theorem}
\definecolor{light-gray}{gray}{0.7}
\begin{document}
	\bibliographystyle{lematema}
	\maketitle
	
	% Abstract
	\begin{abstract}
		We present results on the relative realizability of infinite families of lines on general smooth tropical cubic surfaces. Inspired by the problem of relative realizability of lines on surfaces, we investigate the information we can derive tropically from the Brundu-Logar normal form of smooth cubic surfaces.
		In particular, we prove that for a residue field of characteristic $\neq 2$ the tropicalization of the Brundu-Logar normal form is not smooth. We also take first steps in investigating the behavior of the tropicalized lines. 
	\end{abstract}
	
	% The article itself
	%%%%%%%%%%%%%%%%%%%%%%%%%%%%%%%%%%%%%%%%%%%%%%%%%%%%%%
	\section{Introduction}
	Since 1849 it has been a well-known fact that every classical smooth cubic surface contains exactly $27$ lines \cite{Ca,Sal}. With the rise of tropical geometry the question about an analogous result about tropical cubic surfaces was inevitable. In 2007 Vigeland proved in \cite{Vig07} that on smooth tropical cubic surfaces, there can be infinitely many tropical lines. Accordingly, the following questions arose:
	\begin{que}[Relative Realizability]
		For a given pair $(L,X)$ of a tropical line $L$ on a tropical cubic surface $X$, is there a pair $(\ell,C)$ of a classical line $\ell$ on a classical cubic surface $C$ such that $\trop{\ell}=L$ and $\trop{C}=X$?\end{que}
		\begin{que}[Relative Realizability, Lifting Multiplicities]
		For a pair $(L,X)$ and a fixed lift $C$ of $X$, how many lines $\ell\subset C$ tropicalize to $L$, i.e. what is the \textit{lifting multiplicity} of $L$ with respect to $C$?\end{que}
	The question of the geometric behavior of the tropical lines and the problem of their relative realizability	has been worked on for more than a decade and is not yet completely solved \cite{Vig07,Vig10,PaVi19,BS15,BK12,Ge18,JPS19,CuDe19,PSS19}.\\
	With respect to the question of lifting multiplicities, there has been a recent new development: in \cite{PSS19} an octanomial model for cubic surfaces is presented, which satisfies that all $27$ lines on a tropically smooth cubic in this form have distinct tropicalizations \cite[Theorem 3.4]{PSS19}, i.e. all lifting multiplicities are one or zero. 
	Also, the conjecture is posed that the $27$ lines on a tropically smooth cubic surface have distinct tropicalizations \cite[Conjecture 4.1]{PSS19}. Moreover, according to \cite{PSS19}, Kristin Shaw has announced a proof that every tropically smooth family of complex cubic surfaces contains 27 lines with distinct tropicalizations.
	
By tools introduced in \cite{Vig10,PaVi19} it is possible to divide tropical lines into isolated lines and infinite families. On general smooth tropical cubics there can only occur two types of families; see Proposition \ref{prop:2types} and \cite{PaVi19,Ge18}. The lifting behavior of one type of family  over characteristic $0$ has been investigated in \cite{BS15,BK12}.
Our first main result, Theorem \ref{theorem:relreal}, is a generalization of this result to fields of arbitrary characteristics, obtained by complementary methods. Also, we present two trend-setting new examples, one for the second family type in Example \ref{ex:3J} and one on the unsolved case to Theorem \ref{theorem:relreal} in Example \ref{ex:3I-1}.

	Unfortunately, the computational expenditure to compute the $27$ lines on a generic classical smooth surface is very high, impeding an easy access to examples of more complex lifts over arbitrary characteristic. The existence of a normal form of classical cubic surfaces which allows the direct computation of all 27 lines from the parameters of the cubic polynomial by simple formulas seems to present a new perspective to solving this problem. Therefore, we ask
	\begin{que}[{\cite[Question 26]{27questions}}]
		How to compute the Brundu-Logar normal form in practice? What does it tell us tropically?
	\end{que}

	The first part of this question has been computationally answered for generic cubics over the $p$-adic fields by Avinash Kulkarni \cite{Ku19}. 
	As a smooth cubic is transformed in the Brundu-Logar normal form by projective linear transformations, this question is closely related to 
	\begin{que}[{\cite[Question 12]{27questions}}]
		How can we decide if a given polynomial defines a smooth tropical surface after a linear transformation of $\mathbb{P}^3$?
	\end{que}
	Unfortunately, the Brundu-Logar normal form of a lift of any chosen smooth tropical cubic is itself no longer tropically smooth. More precisely, our second main result is the following theorem:
	\begin{theoremno}[Theorem \ref{theorem:val(2)=0}]
		If $\val(2)=0$, the tropicalization of any cubic in Brundu-Logar normal form is not tropically smooth.
	\end{theoremno}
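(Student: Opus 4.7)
The plan is to use the explicit Brundu-Logar normal form to show that the support of the defining cubic, or equivalently the regular subdivision of its Newton polytope induced by the valuations of its coefficients, obstructs tropical smoothness whenever $\val(2)=0$.

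I would begin by writing the Brundu-Logar normal form explicitly as a cubic $f \in K[x_0, x_1, x_2, x_3]$ depending on the moduli parameters of the surface, and by identifying the set of monomials that appear with \emph{a priori} nonzero coefficient. The hypothesis $\val(2) = 0$ ensures that the integer constants appearing in these coefficients which contain factors of $2$ do not vanish modulo the maximal ideal, so the support of $f$ in the degree-$3$ simplex is determined by the combinatorial shape of the normal form itself and not by unwanted residue-field cancellations.

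Next, I would invoke the standard tropical criterion: the tropicalization of the cubic surface is smooth if and only if the regular subdivision of $\Newt{f}$ induced by the valuations of the coefficients of $f$ is a unimodular triangulation of the degree-$3$ simplex $\conv{3e_0, 3e_1, 3e_2, 3e_3}$. Such a unimodular triangulation must use every one of the $20$ lattice points of this simplex as a vertex; in particular every degree-$3$ monomial in $x_0,x_1,x_2,x_3$ must appear in $f$ with nonzero coefficient. I would then single out a specific lattice point $\alpha$ whose monomial has coefficient identically zero in the Brundu-Logar form (as a polynomial in the moduli, with $\val(2) = 0$ precluding accidental nonvanishing), and conclude that $\trop{V(f)}$ cannot be tropically smooth.

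The principal obstacle I anticipate is controlling this identically-zero coefficient uniformly across all choices of moduli parameters defining a smooth cubic surface in the normal form; this requires a careful reading of the explicit formulas for the coefficients of the Brundu-Logar polynomial and perhaps a change of coordinates that makes the missing monomial manifest. If no coefficient vanishes identically, the fallback plan is to exhibit an algebraic relation among the nonzero coefficients that forces two valuations to coincide, producing a non-unimodular cell in the induced subdivision. The condition $\val(2) = 0$ is essential because in residue characteristic $2$ many coefficients containing factors of $2$ collapse, so the support of $f$ shifts and the combinatorial argument above would need to be modified.
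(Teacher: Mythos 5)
Your main approach rests on a different notion of smoothness than the one this theorem is about, and under that reading the statement would be vacuous. In Section~\ref{sec:BLNFandtrop} smoothness of $V(\trop{f_P})$ means that the dual subdivision of $\Newt{f_P}$ is unimodular, where $\Newt{f_P}$ is the convex hull of the \emph{ten} support points of the Brundu-Logar form (a polytope of volume $\tfrac{5}{3}$, so a unimodular triangulation has exactly $10$ tetrahedra) --- not a unimodular triangulation of the full simplex $3\Delta_3$ using all $20$ lattice points. Your plan to ``single out a lattice point $\alpha$ whose monomial has coefficient identically zero'' succeeds trivially (e.g.\ $x_1^3$ never appears), but that only shows the subdivision of $3\Delta_3$ is not unimodular, which is immediate from the support and has nothing to do with $\val(2)$; it says nothing about whether the induced regular subdivision of the ten-vertex polytope $\Newt{f_P}$ is unimodular. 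The fact that your argument never uses the hypothesis $\val(2)=0$ in an essential way is the tell-tale sign that it is proving a different, contentless statement.

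Your fallback plan points in the right direction but is not yet an argument: the mere coincidence of two coefficient valuations does not produce a non-unimodular cell (unimodular triangulations are routinely induced by weight vectors with repeated entries). What is actually needed, and what the paper does, is a case analysis over which of the parameters $a,b,c,d,g$ attain the minimal valuation and which cancellations can occur among the ten coefficients, each of which is a signed sum of these parameters (e.g.\ $-a-b+c$, $a+c-d$, $d\pm g$). In every case one of three concrete obstructions appears among the vertices of minimal weight: more than $4$ of the $10$ vertices attain the minimum, or $4$ coplanar vertices do, or the $4$ minimal-weight vertices span a tetrahedron of normalized volume greater than $1$ (an edge with an interior lattice point). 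The hypothesis $\val(2)=0$ enters precisely to guarantee $\val(\pm 2a)=\val(a)$, so that the vertices dual to $x_1^2x_2$ and $x_1x_2^2$ participate in these counts; without it the case analysis changes. To complete your proof you would need to carry out this classification of minimal-valuation patterns on the ten-point polytope, not on $3\Delta_3$.
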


However, we can still investigate the tropicalization of cubics in Brundu-Logar normal form and look into the behavior of the tropicalizations of the 27 lines. 
We will see in Section \ref{sec:BLNF_lines} that the statement of Theorem 3.4 from \cite{PSS19} does not hold for the Brundu-Logar normal form, as we can have higher lifting multiplicities in this setting, as illustrated in Examples \ref{ex:trivsub} and \ref{ex:1}. These examples prove that \cite[Conjecture 4.1]{PSS19} does not hold for non-smooth tropical cubics.

This paper is organized as follows. In Section \ref{sec:relreal} we present results on the relative realizability of lines on tropical cubic surfaces. In Section \ref{sec:BLNFandtrop} we study the tropical Brundu-Logar normal form and its tropicalized lines. Section \ref{sec:ex} presents significant examples of realizable lines in tropical cubic surfaces, one of them obtained by transforming to the Brundu-Logar normal form.
\\
\
\\
\noindent\textbf{Acknowledgments.} I am very grateful to Hannah Markwig for her continued support and advice during this project. I warmly thank Diane Maclagan for her support during my research at the University of Warwick. Also I want to thank Marta Panizzut, Michael Joswig and Bernd Sturmfels for insightful and inspiring conversations on cubic surfaces. Very special thanks to Marta Panizzut and Avinash Kulkarni for their ready and generous assistance with the computations. \\
I also want to thank Sara Lamboglia and the anonymous referee for their helpful comments on the presentation of this paper.

%%%%%%%%%%%%%%%%%%%%%%%%%%%%%%%%%%%%%%%%%%%%%%%%%%%%%%%%%%%%%%%%%
	\section{Relative realizability of infinite families of lines}\label{sec:relreal}
	Let $3\Delta_3:=\conv{(0,0,0),(3,0,0),(0,3,0),(0,0,3)}.$ Denote the facets of $3\Delta_3$ with $F_i, F_j,F_k,F_l.$
	Using the concept of decorations introduced in \cite{Vig10}, we can classify lines on surfaces by their way of being contained in the surface: passing through vertices or edges. Decorations allow one to distinguish isolated lines and infinite families of lines contained in the tropical surface. This concept was refined by the theory of motifs introduced in \cite{PaVi19}, where also a thorough classification of all motifs of lines on general smooth surfaces of varied degree can be found, completing the started classification from \cite{Vig10}. The concept of generality used in this section is the one introduced by \cite{Vig10}.

\begin{dfn}[{\cite[Definition 6]{PaVi19}}]
    A motif of a tropical line $L$ on a tropical surface $X$ is a pair $(G,\mathcal{R})$, where the primal motif $G$ is a decoration of the underlying graph of $L$ with a finite number of dots and vertical line segments. Every dot stands for a vertex of $X$ contained in that cell of $L$ corresponding to the decorated edge or vertex of the graph, while a vertical line segment stands for an edge of $X$ that $L$ intersects in its relative interior with the cell whose corresponding part in the graph is decorated.
    The dual motif $\mathcal{R}$ is the subcomplex in the dual subdivision of $X$ dual to the union of all cells of $X$ that $L$ passes through: $\mathcal{R}=\bigcup_{p\in L} (\min\{C\subset X \text{ cell of } X \,\,  |\,\, p\in C \})^{\vee}$.
\end{dfn}{}
	\begin{prop}[{\cite[Proposition 23]{PaVi19}}, {\cite[Section 3.2]{Ge18}}]\label{prop:2types} Only the motifs 3I and 3J allow infinite families on general smooth tropical cubic surfaces; see Figure \ref{fig:3Iand3J}.
	\end{prop}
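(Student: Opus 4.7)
The strategy is a finite case-check enabled by two classifications. First, I would rely on the description of tropical lines in $\mathbb{R}^{3}$: each is a rational metric tree with four unbounded rays in the directions $-e_{1},-e_{2},-e_{3}$ and $e_{1}+e_{2}+e_{3}$, and there is only a short list of trivalent combinatorial types. Second, for a general smooth tropical cubic $X$, whose dual complex is a unimodular triangulation of $3\Delta_{3}$, the condition $L\subset X$ restricts the dual motif $\mathcal{R}$ to a connected subcomplex whose local structure is compatible with the tree $L$. Enumerating the compatible pairs $(G,\mathcal{R})$ gives a finite list of motifs to inspect, which is exactly the enumeration carried out in \cite[Section 3.2]{Ge18} and \cite[Proposition 23]{PaVi19}. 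My starting point would be to reproduce this list.

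With the list in hand, the core of the proof is a dimension count per motif. A tropical line $L$ has finitely many continuous parameters: the coordinates of its vertices and the lengths of its bounded edges. Each decoration in the primal motif translates into an affine-linear condition on these parameters: a dot pins a vertex of $L$ to a vertex of $X$, a vertical segment forces an edge of $L$ to meet a prescribed edge of $X$ in its relative interior, and undecorated pieces are confined to lie in specified $2$-cells of $X$. Under Vigeland's generality hypothesis \cite{Vig10}, these linear conditions achieve their expected rank, so the realization space of each motif has its expected dimension. For all but finitely many motifs this dimension is zero, yielding only isolated lines; the content of the proposition is that the dimension is positive only for 3I and 3J.

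For the two exceptional motifs I would verify the presence of the family explicitly. In both 3I and 3J there is an interior edge $e$ of $X$ parallel to a bounded edge of $L$, and the realization space is parametrized by sliding the central portion of $L$ along $e$; a direct coordinate check shows that the translated line remains contained in the same union of cells of $X$, producing a genuine one-parameter family. Conversely, a simultaneous inspection of the other motifs shows that at least one decoration prescribes a vertex of $L$ at a vertex of $X$ in a way that rigidifies the configuration, or imposes two independent edge-containment conditions that fix every remaining degree of freedom.

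The main obstacle is the bookkeeping in the dimension count: the list of motifs is substantial, and one has to keep careful track of which linear conditions each decoration imposes on the vertex coordinates of $L$ and the edge lengths. The generality of $X$ is essential here, since on non-generic surfaces an accidental parallelism between edges of $X$ could introduce spurious families outside 3I and 3J; ruling this out is precisely what the generality assumption of \cite{Vig10} buys, and once generic ranks are correctly computed the classification falls out by direct inspection.
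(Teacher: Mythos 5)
The paper does not prove this proposition: it is imported wholesale from \cite[Proposition 23]{PaVi19} and \cite[Section 3.2]{Ge18}, so there is no internal argument to compare yours against. Your sketch is a fair reconstruction of the strategy actually used in those references --- fix the combinatorial types of a tropical line in $\mathbb{R}^3$, enumerate the motifs $(G,\mathcal{R})$ compatible with a unimodular triangulation of $3\Delta_3$, and decide for each motif whether the realization space is positive-dimensional, with Vigeland's generality hypothesis guaranteeing that the linear conditions imposed by the decorations have expected rank. What your write-up defers is precisely where all the content lives: the complete finite list of cubic motifs (3A through 3J in the notation of \cite{PaVi19}) and the case-by-case rank computation; without reproducing that list the claim that \emph{only} 3I and 3J survive is asserted rather than proved. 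One small geometric inaccuracy: for motif 3I the family $(L_a)_{a\ge 0}$ is not obtained by sliding the middle segment along an interior edge of $X$ parallel to it; rather, one vertex of $L$ sits at a vertex of $X$ (the dot in the primal motif) and the family is parametrized by the length $a$ of the bounded segment, with the second vertex moving out along the $2$-skeleton --- this is why the dual motif is a $4$-exit tetrahedron and why the degenerate member ($a=0$) plays a distinguished role later in the paper. If you intend this as a genuine proof rather than a pointer to the literature, you would need to supply the enumeration; as a reading of the cited sources, it is accurate in outline.
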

	\begin{figure}
		\centering
		\begin{subfigure}{0.2\textwidth}
		\centering
		  \includegraphics[width = 0.7\linewidth]{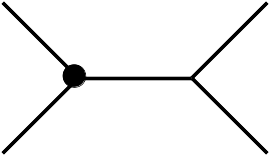}
		  \caption{Motif 3I}
		\end{subfigure}\qquad
			\begin{subfigure}{0.2\textwidth}
			\centering
		  \includegraphics[width = 0.7\linewidth]{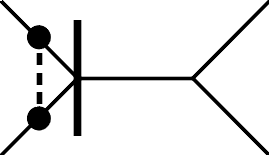}
		  \caption{Motif 3J}
		\end{subfigure}
		\caption{Infinite families on general smooth tropical cubic surfaces}
		\label{fig:3Iand3J}
	\end{figure}

	\begin{rem} The dual motif to 3I is a unimodular tetrahedron, which has one edge in $F_i\cap F_j$ and at least a one-dimensional intersection with both $F_k$ and $F_l$. This is called a \textit{$4$-exit tetrahedron}. 
		The property of relative realizability of families of motif 3I depends on how the dual motif, a 4-exit tetrahedron, is contained in the subdivision of the Newton polytope $3\Delta_3$ of the cubic surface. 
		For smooth cubic surfaces, there are two ways up to coordinate changes to include a 4-exit tetrahedron in $3\Delta_3$. 
	\end{rem}

	\begin{dfn}
	A family of lines $(L_a)_{a\geq 0}$ on a smooth cubic surface is \textit{of type 3I-1} if it has motif 3I with the dual motif contained in $3\Delta_3$ in a position equivalent to $\{(0,0,0),(0,0,1),(2,1,0),(1,0,2)\}$ up to coordinate changes; see Figure \ref{fig:3I-1}. 
		 It is \textit{of type 3I-2} if it is of motif 3I and the dual motif is contained in $3\Delta_3$ in a position equivalent by coordinate changes to $\{(0,0,0),(0,0,1),(2,1,0),(1,1,1)\}$; see Figure \ref{fig:3I-2}. 
	\end{dfn}
	\begin{figure}
		\centering
			\begin{subfigure}{0.4\textwidth}
			\centering
		  \includegraphics[width = 0.9\linewidth]{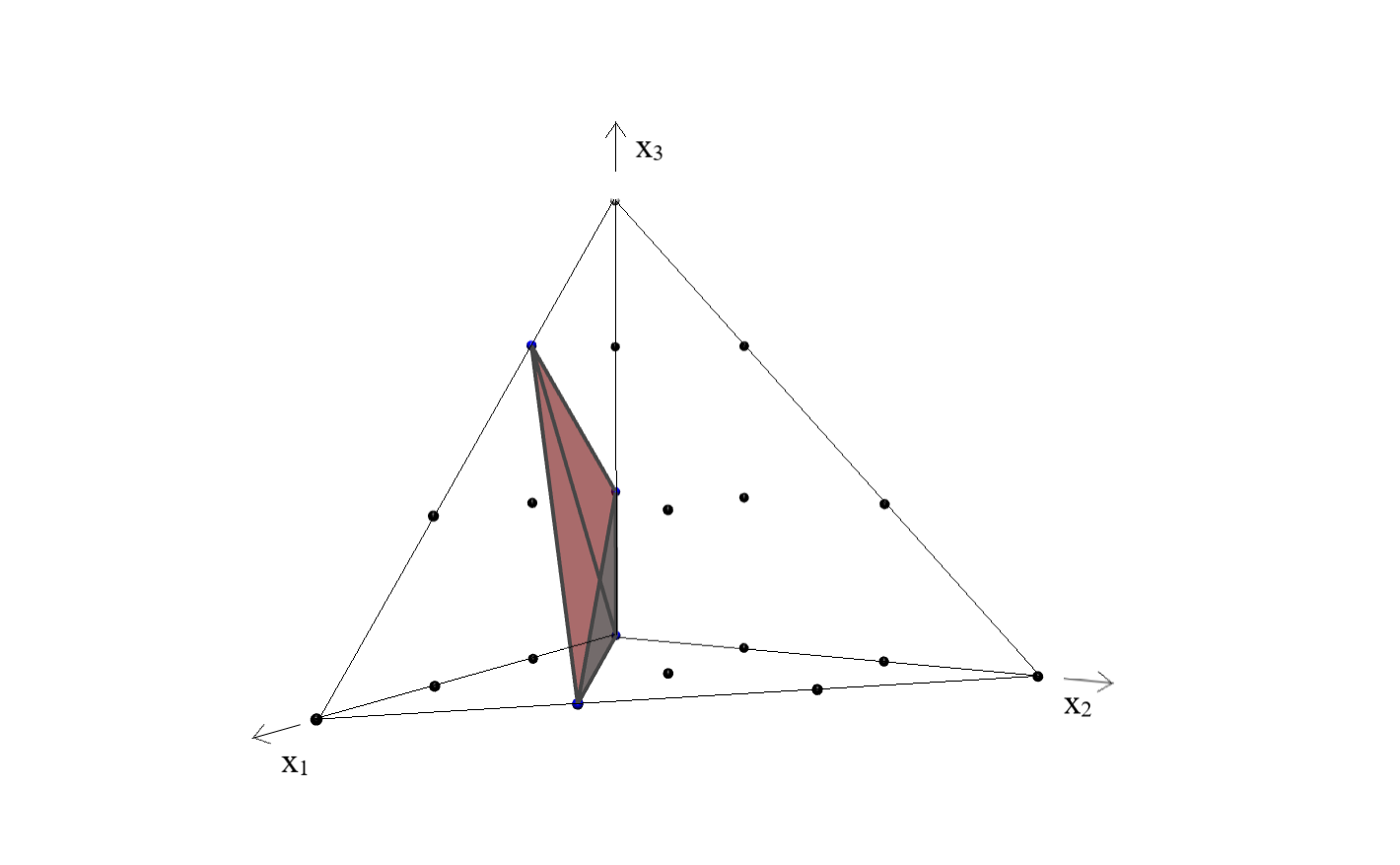}
		  \caption{Type 3I-1}\label{fig:3I-1}
		\end{subfigure}
		\begin{subfigure}{0.4\textwidth}
		\centering
		  \includegraphics[width = 0.9\linewidth]{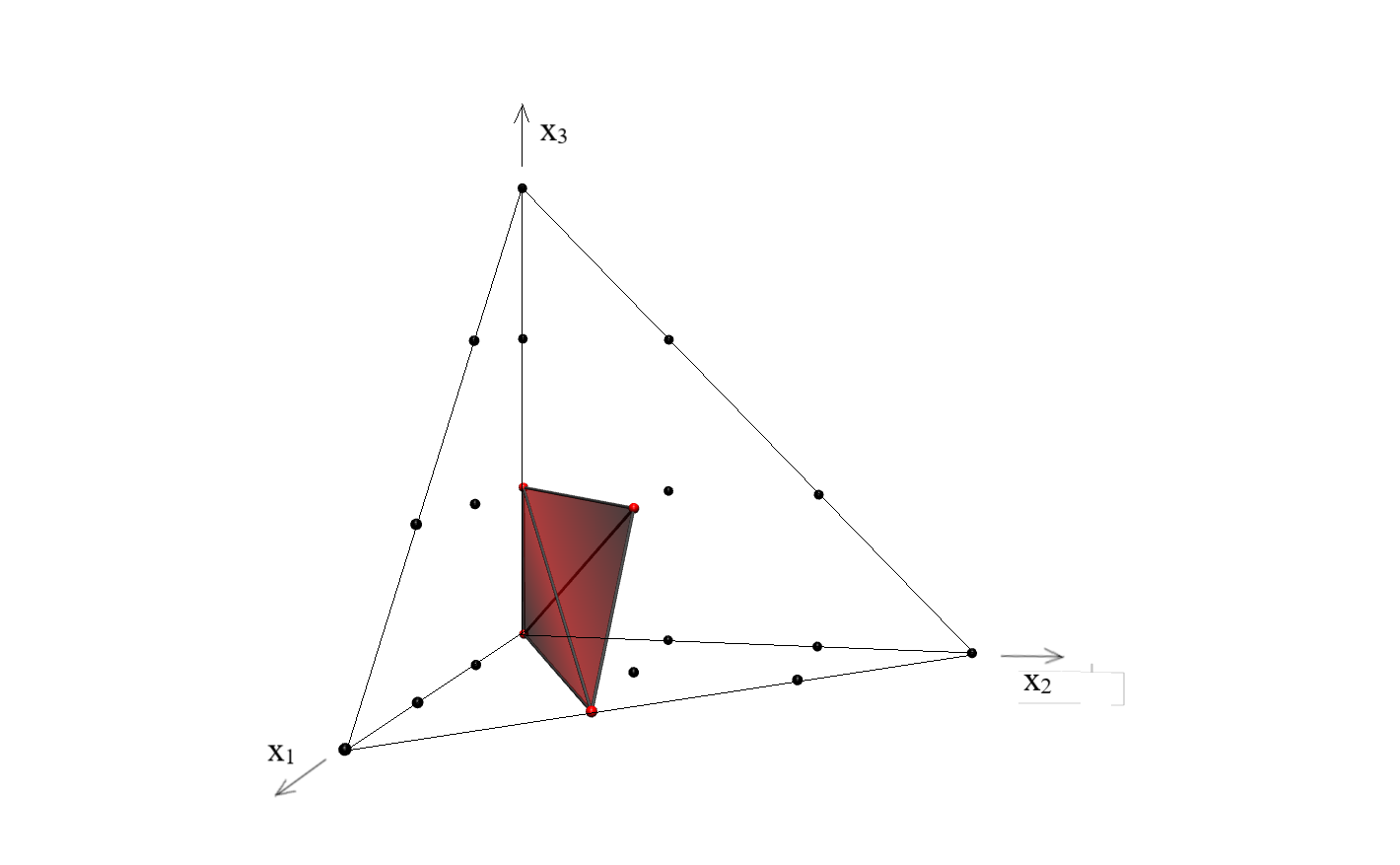}
		  \caption{Type 3I-2}\label{fig:3I-2}
		\end{subfigure}
		\caption{Position of the 4-exit tetrahedron in both types}
		\label{fig:3I-1and3I-2}
	\end{figure}	
		A tropical line is called \textit{degenerate,} if the bounded middle line segment has length zero.
	\begin{prop}\label{cor:relreal}Over $K=\mathbb{C}$ a line $L$ of motif 3I on a tropical cubic surface $X$ lifts if and only if it is degenerate and of type 3I-1.
	\end{prop}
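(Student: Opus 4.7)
The plan is to translate the realizability question into existence of a classical line on the initial cubic surface cut out by $\initial{w}{f}$ for $w$ in the relative interior of the bounded cell of $L$, and then to treat the two possible tetrahedron positions by a direct classification of lines on each initial cubic.

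First I would identify the relevant initial form. Since motif 3I has a 4-exit unimodular tetrahedron as dual motif, the initial polynomial of the defining cubic at any $w$ in the relative interior of the bounded cell of $L$ is supported on exactly the four lattice points of this tetrahedron; after rescaling via the torus action, it takes one of the two normal forms
\begin{equation*}
f^{(1)}_{\mathrm{in}} = 1 + z + x^2 y + x z^2 \text{ (type 3I-1)}, \qquad f^{(2)}_{\mathrm{in}} = 1 + z + x^2 y + xyz \text{ (type 3I-2)}.
\end{equation*}
A pair $(\ell,C)$ lifting $(L,X)$ then exists if and only if there is a classical line in $V(f^{(i)}_{\mathrm{in}}) \subset (\mathbb{C}^*)^3$ whose tropicalization contains the local combinatorial shape of $L$ around the bounded cell.

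For the ``if'' direction I would, under the assumption that $L$ is degenerate and of type 3I-1, parametrize lines in $V(f^{(1)}_{\mathrm{in}})$ by Plücker coordinates, pick one compatible with the degenerate 3I combinatorics, and verify directly both the containment $\ell \subset V(f^{(1)}_{\mathrm{in}})$ and the claimed tropicalization. For the ``only if'' direction the proof splits into two parts: in the 3I-1 case I would show that every classical line on $V(f^{(1)}_{\mathrm{in}})$ has a degenerate tropicalization, so that a non-degenerate $L$ cannot lift; and in the 3I-2 case I would classify all lines on the (possibly singular) cubic $V(f^{(2)}_{\mathrm{in}})$ and check that none of their tropicalizations are of motif 3I, because the position of the tetrahedron $\{(0,0,0),(0,0,1),(2,1,0),(1,1,1)\}$ in $3\Delta_3$ forces the wrong asymptotic structure on any line on the initial surface.

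The main obstacle will be the 3I-2 classification: one must work out all classical lines on a singular cubic with prescribed Newton polytope and check combinatorial compatibility of their tropicalizations with the desired motif. The 3I-1 case over $\mathbb{C}$ has already been handled in \cite{BS15,BK12}, and Theorem \ref{theorem:relreal} below extends the positive 3I-1 statement to arbitrary residue characteristic; the content of the present proposition is packaging that characteristic-$0$ analysis together with a negative argument ruling out all lifts of motif 3I lines outside the degenerate 3I-1 locus.
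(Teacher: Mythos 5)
Your plan is a genuinely different route from the paper's: the paper's entire proof is a citation of Theorem~7.2 of \cite{BS15} applied to $\text{star}_{(0,0,0)}(X)$, where $(0,0,0)$ is the vertex of $X$ lying on $L$, and that cited theorem is already an if-and-only-if statement covering \emph{both} positions of the $4$-exit tetrahedron. So the 3I-2 classification you single out as ``the main obstacle'' is not something the paper leaves open -- it is exactly the content of the result being quoted, and your outline amounts to reproving it. That is legitimate, and your negative arguments (taking initial degenerations at the vertex and showing the resulting line cannot sit inside the four-term initial cubic with the required local shape) are essentially the Gr\"obner-basis mechanism used in Theorem~\ref{theorem:relreal}, so that half of the programme is workable.

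The genuine gap is in the ``if'' half of your claimed equivalence. Exhibiting a classical line $\ell_0\subset V(f^{(1)}_{\mathrm{in}})$ with the right tropicalization does not produce a pair $(\ell,C)$ lifting $(L,X)$: the surface $V(f^{(1)}_{\mathrm{in}})$ is a four-monomial cubic whose tropicalization is the fan $\text{star}_{(0,0,0)}(X)$, not $X$, so ``verifying the containment $\ell\subset V(f^{(1)}_{\mathrm{in}})$ and the claimed tropicalization'' only settles the local (star) version of the statement. One must still deform $\ell_0$ to a line on an honest lift of the full surface $X$ (equivalently, show the local approximation extends to a global one); this regeneration step is where the positive half of \cite{BS15}'s theorem does its real work, and it is absent from your outline. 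A smaller imprecision: $\text{in}_{w}(f)$ is supported on all four lattice points of the dual tetrahedron only when $w$ is the vertex of $X$ contained in $L$; for any other $w$ in the relative interior of the bounded cell the support is a proper face of the tetrahedron, so you should anchor the whole argument at that vertex, as the paper does with $\mathbf{0}$.
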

	\begin{proof} 
	This is a corollary of Theorem 7.2 in \cite{BS15} which states, that for an algebraic surface $\mathcal{S}$ over $(\mathbb{C}^*)^3$ with Newton polytope a unimodular 4-exit tetrahedron, the tropical line $\text{star}_{(0,0,0)}(L_l)\subset\mathcal{S}$ is approximable by a complex algebraic line $\mathcal{L}_l\subset\mathcal{S}$ if and only if $l=0$ and the Newton polytope is of the form $\{(0,0,0),(0,0,1),(2,1,0),(1,0,2)\}$. We can assume the vertex of $X$ contained in $L$ to be $(0,0,0)$ and apply this	locally to  $\text{star}_{(0,0,0)}(X)$.
	\end{proof}
	\begin{thm}[{\cite[Theorems 4.1.9 and 4.1.16]{Ge18}}]\label{theorem:relreal}
		Let $(K,\val)$ be an algebraically closed field with valuation and let $\Bbbk$ be its residue field. Let $X$ be a general smooth tropical cubic surface containing a family of lines $(L_a)_{a > 0}$.\\
		If the family $(L_a)_{a > 0}$ is of type 3I-1 with $\text{char}(\Bbbk)\neq 2$
		or the family is of type 3I-2, then for any $f\in K[x_0,\dots,x_3]$ homogeneous with $X=V(\trop{f})$ the family cannot be realized as lines on $V(f)$, i.e. the lifting multiplicity of any 
		non-degenerate member of the family is zero.
	\end{thm}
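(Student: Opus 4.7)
The plan is to work locally at the vertex $v$ of $X$ through which the family $(L_a)_{a>0}$ passes, reducing the lifting question to the consistency of a system of polynomial equations over $\Bbbk$. Since whether a candidate line $\ell \subset V(f)$ tropicalizes to a given $L_a$ depends only on the behavior of $f$ in a tropical neighborhood of $v$, I replace $f$ by its initial form $\initial{w}{f}$ at a weight supported on $v$; its Newton polytope is the 4-exit tetrahedron realizing the dual motif. After a monomial change of coordinates this polytope sits in the canonical position of Figure~\ref{fig:3I-1} or \ref{fig:3I-2}, and the initial polynomial is the sum of the four corresponding monomials with coefficients $c_i \in K^\ast$ of specified valuations.

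I then parametrize a hypothetical lift as $\ell(t)=P_0+t(P_1-P_0)$ with $P_0,P_1\in(K^\ast)^3$. The combinatorial data of $L_a$, in particular the length $a>0$ of the bounded middle segment, prescribes the valuations of the coordinates of $P_0,P_1$ and of their differences; the unknowns are the leading residues $\alpha_i,\beta_i\in\Bbbk^\ast$ of these coordinates. The condition $\ell\subset V(\initial{w}{f})$ is equivalent to the vanishing of the four coefficients of the cubic $\initial{w}{f}\bigl(\ell(t)\bigr)\in K[t]$; expanding and extracting the leading terms in the uniformizer produces a system of polynomial equations in the $\alpha_i,\beta_i$ over $\Bbbk$. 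By the standard initial-ideal criterion for tropical lifting, the existence of a lift is equivalent to the solvability of this system in $(\Bbbk^\ast)^N$.

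The crux of the proof is then to show that this residue system is inconsistent. For type 3I-1 with $a>0$, a careful expansion reveals that two distinct monomial substitutions---coming from the factors $x^2$ in $x^2y$ and $z^2$ in $xz^2$ after expansion via the binomial theorem---contribute terms of the same uniformizer-order to a single coefficient of $\initial{w}{f}\bigl(\ell(t)\bigr)$, and these terms come with a binomial factor of $2$. For $a>0$ these contributions cannot be separated by further valuation considerations, and the resulting residue equation forces $2\cdot(\text{monomial in units})=0$, a contradiction whenever $\text{char}(\Bbbk)\neq 2$. In the degenerate case $a=0$, additional monomial contributions acquire the same valuation and the factor of $2$ merges into a generic sum that need not vanish, consistent with Proposition~\ref{cor:relreal}. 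For type 3I-2, the extra monomial $xyz$ coming from the central vertex $(1,1,1)$ of the tetrahedron produces a residue equation whose only possible solutions force some $\alpha_i$ or $\beta_i$ to vanish in $\Bbbk$, a combinatorial obstruction that does not involve the binomial $2$ and is therefore valid in any characteristic.

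The main obstacle is the explicit bookkeeping in the expansion: although only four monomials appear in $\initial{w}{f}$, the interplay between the parametrization and the prescribed valuations is delicate, and one must track precisely which contributions cancel and which persist as $a$ ranges over $a>0$. Isolating the binomial $2$ rather than allowing it to be absorbed into an apparently generic residue is the decisive step for type 3I-1, while for type 3I-2 one must identify which monomial combination selects the characteristic-free obstruction. Explaining why the 3I-1 argument breaks down exactly in characteristic $2$---and not merely producing a proof that happens to invoke division by $2$---is what makes the hypothesis on $\Bbbk$ sharp.
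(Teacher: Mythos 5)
Your overall strategy --- localize at the vertex, parametrize a hypothetical lift $\ell(t)=P_0+t(P_1-P_0)$, and extract a residue system over $\Bbbk$ whose inconsistency obstructs lifting --- is a legitimate route and mechanically quite different from the paper's argument. The paper instead notes that a lift $V(I_a)\subset V(f)$ forces $0\neq\initial{0}{f}\in\initial{0}{I_a}$, writes the initial ideal down explicitly as $\initial{0}{I_a}=\langle x_0-x_1+x_3,\,x_1+x_2\rangle$ (a Gr\"obner basis for a lex order), and checks by polynomial division that no nonzero polynomial supported on the four monomials dual to the $4$-exit tetrahedron lies in this ideal: for type 3I-1 the remainder carries the term $-2\gamma\,x_0x_1^2$, which is exactly where $\text{char}(\Bbbk)\neq 2$ enters, while for type 3I-2 no factor of $2$ appears. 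That ideal-membership formulation sidesteps precisely the bookkeeping you identify as your main obstacle, and it settles all non-degenerate members $a>0$ at once because their initial ideals at the vertex coincide.

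As written, however, your proposal has a genuine gap: the decisive step is asserted rather than performed. The claims that ``a careful expansion reveals'' a surviving residue equation of the form $2\cdot(\text{unit})=0$ for type 3I-1, and that the monomial $xyz$ forces some residue $\alpha_i$ or $\beta_i$ to vanish for type 3I-2, are the entire content of the theorem, and you neither exhibit the residue system nor verify its inconsistency uniformly in $a>0$. There is also an error in the setup: $\ell\subset V(f)$ is not equivalent to $\ell\subset V(\initial{w}{f})$ over $K$; the correct local statement is that the initial degeneration of $\ell$ over the residue field lies in $V(\initial{w}{f})\subset(\Bbbk^{*})^{3}$, and to pass from $f(\ell(t))=0$ to a system over $\Bbbk$ you must additionally argue that the lowest-valuation contribution to each coefficient of $t^{k}$ comes only from the four initial monomials of $f$, with no interference from the strictly-higher-valuation monomials --- this requires fixing the normalization of the weight vector (as the paper does by putting valuation $0$ on the four tetrahedron coefficients and positive valuation elsewhere). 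Until the expansion is actually carried out and these points are repaired, what you have is a plausible plan, not a proof.
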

	\begin{proof} Since we consider our cubic $f\in K[x_0,\dots,x_3]$ homogeneous, we consider everything over $\mathbb{R}^4/\mathbb{R}\mathbf{1}$.
		Without loss of generality we can assume that the monomials corresponding to the vertices of the dual motif of the family have coefficients of valuation zero, while the coefficients of all other monomials in $f$ have strictly positive valuation. We write $\mathbf{0}:=(0,0,0,0).$\\
		For $a>0$ let $I_a\subset K[x_0,...,x_3]$ be a linear homogeneous ideal, such that $\trop{V(I_a)}=L_a$. If the lines are realizable, we have $0\neq\text{in}_{\mathbf{0}}(f)\in \text{in}_{\mathbf{0}}(I_a).$ 
	 The initial ideal of $I_a$ in $\mathbf{0}$ is given as $\text{in}_{\mathbf{0}}(I_a) = \langle x_0-x_1+x_3, x_1+x_2 \rangle$. This basis is a Gr\"obner basis with the lexicographic order with $x_3\succ \ldots \succ x_0.$\\
		If $(L_a)_{a > 0}$ has type 3I-1, we obtain $ \sigma:=\text{supp}(\text{in}_{\mathbf{0}}(f)) =\{x_0^3, x_0^2x_3, x_1x_3^2,x_1^2x_2 \}.$  By Gr\"obner basis theory, we know that there exits a polynomial $\alpha x_0^3 + \beta x_0^2x_3 + \gamma x_1x_3^2 + \delta x_1^2x_2\in\text{in}_{\mathbf{0}}(I_a)$ of support $\sigma$ if and only if its polynomial division with the Gr\"obner basis of $\text{in}_{\mathbf{0}}(I_a)$ has remainder zero. Carrying out the polynomial division, we find the remainder
	$$(\alpha-\beta) x_0^3 + (\beta+\gamma)x_0^2x_1 -2\gamma x_0x_1^2 + (\gamma -\delta)x_1^3 \in \Bbbk[x_0,...,x_3].$$
		If $\text{char}(\Bbbk)\neq 2$, this polynomial is zero if and only if $\alpha = \beta=\gamma =\delta =0$.\\
		For $(L_a)_{a > 0}$ of type 3I-2, $\text{supp}(\text{in}_{\mathbf{0}}(f)) =\{x_0^3, x_0^2x_3, x_1^2x_2, x_1x_2x_3 \}.$ The remainder of $\alpha x_0^3 + \beta x_0^2x_3 + \gamma x_1^2x_2 + \delta x_1x_2x_3$ by polynomial division is $$(\alpha-\beta) x_0^3 + \beta x_0^2x_1 -\delta x_0x_1^2 + (\gamma +\delta)x_1^3 \in \Bbbk[x_0,...,x_3].$$
		This polynomial is zero if and only if $\alpha = \beta=\gamma =\delta =0$. 
	\end{proof}
	Theorem \ref{theorem:relreal} deals with the non-degenerate lines of motif 3I. We can use the same techniques to prove the following theorem. 
	\begin{thm}\label{theorem:deg3I}
		Let $(K,\val)$ be an algebraically closed field with valuation and let $\Bbbk$ be its residue field. Let
		$X$ be a general smooth tropical cubic surface containing a family of lines of type 3I-2.
		The degenerate line of this family cannot be realized on any lift $V(f)$ of $X$, $f\in K[x_0,\dots,x_3]$ homogeneous.
	\end{thm}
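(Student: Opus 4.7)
The plan is to run the same strategy as in Theorem \ref{theorem:relreal}: assume the degenerate line $L_0$ is realizable, so there is a linear homogeneous ideal $I_0\subset K[x_0,\dots,x_3]$ with $\trop{V(I_0)}=L_0$ and $f\in I_0$; consequently $0\neq\text{in}_{\mathbf{0}}(f)\in \text{in}_{\mathbf{0}}(I_0)$. With the same normalization as in the previous proof (vertex of $L_0$ at $\mathbf{0}$, dual-motif monomials of $f$ of valuation zero), $\text{in}_{\mathbf{0}}(f)=\alpha x_0^3+\beta x_0^2x_3+\gamma x_1^2x_2+\delta x_1x_2x_3$ with $\alpha,\beta,\gamma,\delta\in\Bbbk^*$.

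The key difference with Theorem \ref{theorem:relreal} is the shape of $\text{in}_{\mathbf{0}}(I_0)$. By the Structure Theorem of tropical geometry, $\trop{V(\text{in}_{\mathbf{0}}(I_0))}$ is the star of $L_0$ at its unique vertex $\mathbf{0}$, and because $L_0$ is degenerate this star is the full $4$-valent tropical line with ray directions $-e_0,-e_1,-e_2,-e_3$. Hence the matroid of the $2$-plane $V(\text{in}_{\mathbf{0}}(I_0))\subset\Bbbk^4$ is the uniform matroid $U_{2,4}$; equivalently, all six Pl\"ucker coordinates $p_{ij}$ of $V(\text{in}_{\mathbf{0}}(I_0))$ lie in $\Bbbk^*$. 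So rather than the single fixed ideal appearing in Theorem \ref{theorem:relreal}, $\text{in}_{\mathbf{0}}(I_0)$ may now be any linear ideal whose six $2\times 2$ Pl\"ucker minors are all nonzero.

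Next, I would use $p_{03}\neq 0$ to solve the two linear generators of $\text{in}_{\mathbf{0}}(I_0)$ for $x_0,x_3$ in terms of $x_1,x_2$, substitute into $\text{in}_{\mathbf{0}}(f)$, and clear $p_{03}^3$. This produces a homogeneous cubic $P(x_1,x_2)\in\Bbbk[x_1,x_2]$ which must vanish identically. The pure-power coefficients of $P$ involve only $\alpha,\beta$ and the Pl\"ucker coordinates: the $x_1^3$ coefficient is a nonzero scalar multiple of $\alpha p_{13}+\beta p_{01}$ and the $x_2^3$ coefficient is a nonzero scalar multiple of $\alpha p_{23}+\beta p_{02}$. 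Their vanishing forces $p_{01}=-(\alpha/\beta)p_{13}$ and $p_{02}=-(\alpha/\beta)p_{23}$; feeding these into the Pl\"ucker relation $p_{01}p_{23}-p_{02}p_{13}+p_{03}p_{12}=0$ causes the first two summands to cancel and yields $p_{03}p_{12}=0$, contradicting $p_{03},p_{12}\in\Bbbk^*$.

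The algebraic core of the proof is short; the real work lies in the previous paragraph, namely translating the degeneracy of $L_0$ into the matroid-theoretic statement that $V(\text{in}_{\mathbf{0}}(I_0))$ has uniform underlying matroid $U_{2,4}$. This is the step I expect to be most delicate: one must check that the bounded edge of a general non-degenerate member of a 3I-2 family really collapses into a configuration whose four ray directions are the full set $\{-e_0,-e_1,-e_2,-e_3\}$ at $\mathbf{0}$, so that the local linear model at $\mathbf{0}$ is forced to be Pl\"ucker-generic. Once that is in place, the Pl\"ucker relation plays the role of the polynomial-division argument in Theorem \ref{theorem:relreal}, and the proof needs no assumption on $\text{char}(\Bbbk)$.
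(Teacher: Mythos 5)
Your proposal is correct, and while it follows the paper's overall skeleton --- normalize $f$ so the dual-motif coefficients have valuation zero, pass to initial ideals at the vertex, and derive a contradiction from $0\neq\text{in}_{\mathbf{0}}(f)\in\text{in}_{\mathbf{0}}(I_0)$ --- the way you close the argument is genuinely different. The paper pins $\text{in}_{\mathbf{0}}(I_0)$ down to the explicit one-parameter normal form $\langle x_0+\lambda x_1-x_3,\ x_0-x_1-x_2\rangle$ with $\lambda\neq 0,-1$ (the residual freedom after using up the torus action on the normalization of $f$; the excluded values of $\lambda$ are exactly your $U_{2,4}$ condition) and then performs polynomial division against this Gr\"obner basis, showing the remainder forces $\alpha=\beta=\gamma=\delta=0$. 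You instead characterize $\text{in}_{\mathbf{0}}(I_0)$ intrinsically --- all six Pl\"ucker coordinates nonzero --- and let the Pl\"ucker relation do the work. Your route buys a coordinate-free, manifestly characteristic-free argument that sidesteps justifying the normal form; the paper's buys explicitness and the full list of linear conditions on $\alpha,\dots,\delta$. I checked your computation: parametrizing the plane by $x_0=(p_{02}x_1-p_{01}x_2)/p_{12}$, $x_3=(-p_{23}x_1+p_{13}x_2)/p_{12}$, the $x_1^3$ and $x_2^3$ coefficients of the substituted cubic are $p_{02}^2(\alpha p_{02}-\beta p_{23})/p_{12}^3$ and $p_{01}^2(\beta p_{13}-\alpha p_{01})/p_{12}^3$, so their vanishing gives $p_{01}p_{23}=p_{02}p_{13}$ and hence $p_{03}p_{12}=0$, exactly as you predicted (your binomials differ from these only by the primal/dual Pl\"ucker convention and signs, and solvability for $x_0,x_3$ requires $p_{12}\neq 0$ rather than $p_{03}\neq 0$ in the standard convention --- immaterial here since all six coordinates are units). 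Two small remarks: the step you flag as ``most delicate'' is actually immediate, since the degenerate line is by definition the $4$-valent star at a single finite vertex, and the identification of $\trop{V(\text{in}_{\mathbf{0}}(I_0))}$ with that star is the standard compatibility of tropicalization with initial degenerations; note also that your argument never uses $\gamma,\delta$, only $\alpha,\beta\in\Bbbk^*$, which is a mild strengthening of what the paper's division argument extracts.
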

	\begin{proof}
	As before we consider everything over $\mathbb{R}^4/\mathbb{R}\mathbf{1}$ and assume that the monomials corresponding to the vertices of the dual motif of the family have coefficients of valuation zero, while all other coefficients in $f$ have strictly positive valuation. Again we denote $\mathbf{0}:=(0,0,0,0).$\\
	Let $L$ be the degenerate line of the family of type 3I-2 on $X$ and let $I\subset K[x_0,...,x_3]$ be a linear homogeneous ideal, such that $\trop{V(I)}=L$.\\
	We use the same argument as in the proof of Theorem \ref{theorem:relreal}:	if the line is realizable, it follows that $0\neq\text{in}_{\mathbf{0}}(f)\in \text{in}_{\mathbf{0}}(I).$ 
	The initial ideal of $I$ in $\mathbf{0}$ can only be determined up to a $\lambda\neq 0,-1$: $\text{in}_{\mathbf{0}}(I) = \langle x_0+\lambda x_1-x_3, x_0-x_1-x_2 \rangle$, given by a Gr\"obner basis with the lexicographic order with $x_3\succ \ldots \succ x_0.$
		If $L$ is of type 3I-2, we know $\sigma:= \text{supp}(\text{in}_{\mathbf{0}}(f)) =\{x_0^3, x_0^2x_3, x_1^2x_2,x_1x_2x_3 \}.$ 
		By polynomial division of a polynomial $\alpha x_0^3+\beta x_0^2x_3 +\gamma x_1^2x_2+\delta x_1x_2x_3$ of support $\sigma$ with the Gr\"obner basis, we obtain the remainder:
		$$(-\gamma-\lambda\delta) x_1^3 + (\lambda\delta+\gamma-\delta) x_1^2x_0 +(\lambda\beta +\delta) x_1x_0^2 + (\alpha+\beta)x_0^3 \in \Bbbk[x_0,...,x_3].$$
		Since $\lambda\neq0,-1$, this polynomial is zero if and only if $\alpha = \ldots =\delta =0$.
	\end{proof}
	\begin{rem}\label{rem:3I-1}
	The same techniques do not reveal similar results for type 3I-1. Indeed the lifting behavior of the two types is quite different.

	Example \ref{ex:3I-1} shows that an analogous statement to Theorem \ref{theorem:deg3I} for the degenerate lines of type 3I-1 over characteristic $p$ is in general not true, suggesting an extension of the validity of Proposition \ref{cor:relreal}.
	
Also recall that for families of type 3I-1 in $\text{char}(\Bbbk)=2,$ there is a non-zero solution for the coefficients $\alpha,\,\beta,\,\gamma,\,\delta $ in the proof of Theorem \ref{theorem:relreal}. Example \ref{ex:3I-1} shows a surface on which a non-degenerate representative of a family of type 3I-1 lifts, giving rise to the following question.
	\end{rem}
	\begin{que}\label{que:3I-1}	For any member $L$ of a family of type 3I-1 on a smooth tropical surface $X$, is there a lift of $X$  defined over a field $K$ with residue field $\Bbbk$ of $\text{char}(\Bbbk)=2$ containing a lift of $L$?
	\end{que}

It remains to investigate families of motif 3J. Similar to lines of type 3I-1, we can find a lift of a smooth tropical cubic surface defined over $\mathbb{Q}_p$, for $p=2$ and $p=5$ each, where a non-degenerate member of the family of motif 3J is realizable. The details are shown in Example \ref{ex:3J}.
The arising question is

		\begin{que}\label{que:3J}
		For any member $L$ of a family of motif 3J on a smooth tropical surface $X$, is there a lift of $X$ over any algebraically closed field $K$ containing a lift of $L$?
		\end{que}

%%%%%%%%%%%%%%%%%%%%%%%%%%%%%%%%%%%%%%%%%%%%%%%%%%	
	\section{The Brundu-Logar normal form and its tropicalization}\label{sec:BLNFandtrop}
The fact that the Pl\"ucker coordinates of the lines on cubics in Brundu-Logar normal form are known explicitly motivates our study of their tropicalizations. In this section let $K$ be algebraically closed, $\text{char}(K)=0.$

	\subsection{The classical Brundu-Logar normal form}\label{sec:BLNF}

	\begin{dfn}\label{def:BLNF}
		A homogeneous cubic polynomial $F\in K[x_1,...,x_4]$ is in \textit{Brundu-Logar normal form} if 
		\begin{align*}
		F=\,&2ax_1^2x_2+(b-g)x_1^2x_3+(-2a)x_1x_2^2+(d+g)x_1x_2x_3+(a-c-d)x_1x_3^2 + \nonumber\\&(b+g)x_1x_2x_4 +(-a-b-c)x_1x_3x_4+(d-g)x_2^2x_4+(a+c-d)x_2x_3x_4\nonumber\\&+(-a-b+c)x_2x_4^2,
		\end{align*}
		where $(a,b,c,d,g)\notin\Sigma$ and $\Sigma:=V(\sigma)\subset \mathbb{P}^4$ is the hypersurface defined by
			\begin{align*}
			\sigma:=\, &c(a+b-c)(2a+b-d)(a-c-d)(a+c+g)(a+c-g)\\&
			(4ac-g^2)(a^2+ac-2ad+ag+d^2-dg)\\&(a^2+2ab+ac-ag+b^2-bg)\\&
			(4a^2+3ab-4ac-3ad-bc-2bd+bg+cd+dg)\\&(4a^3+4a^2b+8a^2c-4a^2d+ab^2-4abc-2abd\\&+2abg+4ac^2+4acd+ad^2+ 2adg+b^2c+b^2g+2bcd\\&-2bcg+cd^2-2cdg-d^2g).
			\end{align*}\end{dfn}
	
	\begin{thm}[{\cite[Theorem 1.4 and 1.7]{BL98}}]
	Every smooth homogeneous cubic polynomial $f\in K[x_1,x_2,x_3,x_4]$  can be brought into Brundu-Logar normal form by applying projective transformations.\\
	Every cubic in Brundu-Logar normal form is smooth and thus contains exactly $27$ lines. 
	In particular, the points in $\mathbb{P}^4\setminus\Sigma$ parametrize all smooth cubics.
	\end{thm}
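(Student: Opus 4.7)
The statement has two essential claims: (a) every smooth homogeneous cubic can be transformed into Brundu-Logar form via $PGL_4(K)$, and (b) the complement $\mathbb{P}^4 \setminus \Sigma$ parametrizes exactly the smooth cubics arising in this form.

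For (a), the plan is to exploit the combinatorics of lines on a smooth cubic surface. Direct inspection of the BL polynomial reveals that the three coordinate lines $\{x_2 = x_3 = 0\}$, $\{x_1 = x_4 = 0\}$, and $\{x_1 = x_2 = 0\}$ all lie on $V(F)$: no monomial is supported only on the two free variables of any of these lines. Geometrically this is a pair of skew lines together with a common transversal meeting them at the coordinate points $e_3$ and $e_4$. Since every smooth cubic contains many such triples, extracted for instance from two skew members of a double-six together with one of their transversals in the classical $27$-lines configuration, one can choose such a triple on an arbitrary smooth cubic and apply a projective transformation moving it to this standard position. The transformed polynomial must then be supported on precisely the ten BL monomials. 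A further normalization, using the residual stabilizer of the three-line configuration inside $PGL_4$ (of dimension $5$, since such a triple has a $10$-dimensional orbit in $PGL_4$), reduces the ten coefficients modulo scalars to the five-parameter family $(a,b,c,d,g)$; the specific linear combinations like $b-g$, $d+g$, or $-a-b-c$ should emerge from tracking how this stabilizer acts on the coefficient space.

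For (b), I would compute the singular locus of $F$ directly. Setting $\partial F/\partial x_j = 0$ for $j=1,\dots,4$ and eliminating $x_1,\dots,x_4$ yields a necessary and sufficient polynomial condition on $(a,b,c,d,g)$ for a singularity to exist on $V(F)$. I expect each factor of $\sigma$ to correspond to a specific geometric degeneration: the simple linear factors such as $c$, $a+b-c$, or $a+c\pm g$, together with $4ac - g^2$, should signal collisions of lines from the chosen configuration with other lines among the $27$, while the higher-degree factors should correspond to nodes appearing along specific curves on the surface. The main obstacle here is purely computational: $\sigma$ has total degree above $20$, and the full elimination is not tractable by hand, so realistically one needs a symbolic computer algebra verification to check that the discriminant factors exactly as written, without missing or extraneous components. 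Finally, the parametrization statement follows by combining (a) and (b) with the dimension count $\dim|\mathcal{O}(3)| - \dim PGL_4 = 19 - 15 = 4$.
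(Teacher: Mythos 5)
First, a point of comparison: the paper does not prove this statement at all --- it is quoted from \cite[Theorems 1.4 and 1.7]{BL98} and used as a black box, so there is no in-paper argument to measure your proposal against; I can only judge it against what the result actually requires. Your starting observation is correct and is indeed the geometric germ of the construction: the three coordinate lines $\{x_2=x_3=0\}$, $\{x_1=x_4=0\}$, $\{x_1=x_2=0\}$ (two skew lines with a common transversal through $e_3$ and $e_4$) all lie on $V(F)$, and moving such a triple into standard position is the right first move. But both places where you defer the work are exactly where the content of the theorem lives. For part (a), the triple you chose has a $5$-dimensional stabilizer in $PGL_4$, and you would still have to prove that this stabilizer acts on the $9$-dimensional projective space of cubics through the three lines with orbit space precisely the $4$-parameter slice $(a,b,c,d,g)$; nothing in your sketch excludes orbits of non-generic dimension or smooth cubics whose orbit misses the slice, and ``the specific linear combinations should emerge'' is the entire difficulty, not a footnote to it. Brundu and Logar sidestep this by rigidifying further: they normalize an ordered \emph{L-set} of five of the $27$ lines, a configuration on which $PGL_4$ acts simply transitively, so the normalizing transformation is unique and no residual quotient has to be analyzed. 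That is the key idea your plan is missing.

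For part (b), identifying $\Sigma$ with the discriminant of the family is not right as stated, and an elimination will not ``factor exactly as written.'' The theorem claims only that $P\notin\Sigma$ implies $S_P$ smooth and that every smooth cubic is reached by some $P\notin\Sigma$; it does not claim $\Sigma$ equals the singular locus. Indeed, with the substitutions $g=e+f$ and $ac=ef$ used for the line formulas, the factor $4ac-g^2$ becomes $-(e-f)^2$, which vanishes precisely when $e=f$, i.e.\ when the explicit parametrization of the $27$ lines in \cite[Table 2]{BL98} degenerates --- a failure of the auxiliary data, not (obviously) a node on the surface. So the correct logical structure is a containment (discriminant locus $\subseteq\Sigma$) plus a surjectivity statement (every smooth cubic admits a normalization avoiding the remaining components of $\Sigma$), and your closing count $19-15=4$ is consistent with both but proves neither.
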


The question how to obtain the Brundu-Logar normal form for a given cubic is  
answered by a Magma script written by Avinash Kulkarni for generic cubics over $p$-adic fields \cite{Ku19}.

	\subsection{Tropicalizing the Brundu-Logar normal form}
	Let $P:=(a,b,c,d,g)\in\mathbb{P}^4\setminus \Sigma$ and $f_P$ the cubic in Brundu-Logar normal form defined by $P$. We denote by $S_P$ the corresponding cubic surface. The tropicalization of $f_P$ in Brundu-Logar normal form is:
	\begin{align}
	\trop{f_P}=\min\{&\val(2a)+2x_1+x_2,\val(b-g)+2x_1+x_3,\val(-2a)+x_1+2x_2,\nonumber\\&\val(d+g)+x_1+x_2+x_3,\val(a-c-d)+x_1+2x_3 ,\nonumber\\&\val(b+g)+x_1+x_2+x_4, \val(-a-b-c)+x_1+x_3+x_4,\nonumber\\&\val(d-g)+2x_2+x_4,\val(a+c-d)+x_2+x_3+x_4,\nonumber\\&\val(-a-b+c)+x_2+2x_4\}.\nonumber%\label{eq:tropBLNF}
	\end{align}
	If all parameters of $f_P$ are not zero, the Newton polytope is given by
	\begin{align}
	\text{Newt}(f_P)=\conv{&(0,1,0,2),(1,1,0,1),(1,0,1,1),(0,2,0,1),(0,1,1,1),\nonumber\\&(2,1,0,0),(2,0,1,0),(1,2,0,0),(1,1,1,0),(1,0,2,0)}.\nonumber
	\end{align}%\label{eq:Newtonpoly}
	\begin{figure}
		\centering
		\begin{minipage}{0.55\textwidth}
		\centering
		  \includegraphics[width=1\linewidth]{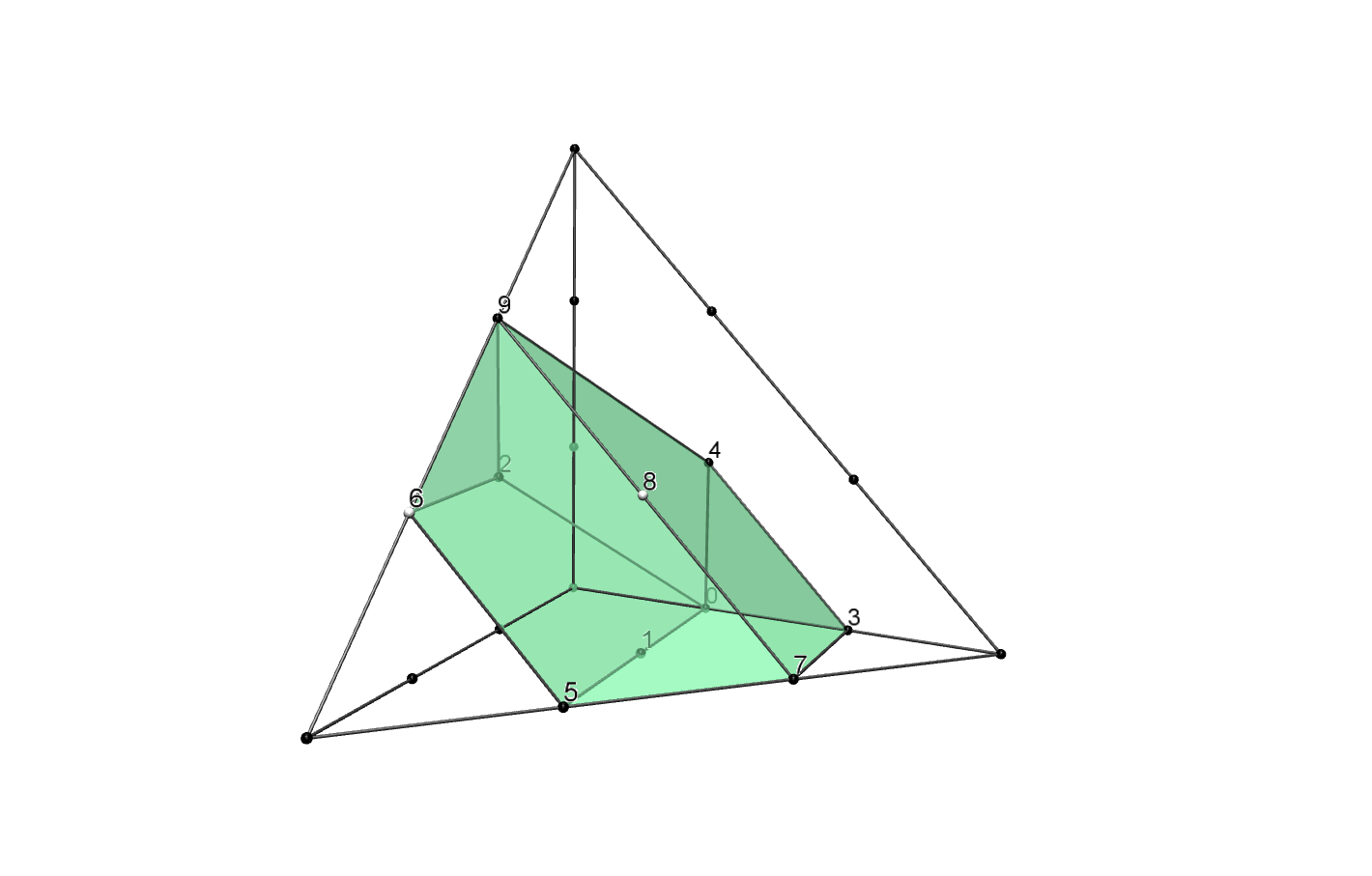}
		  \caption{Newton polytope of the tropicalized Brundu-Logar normal form}\label{fig:Newt(f)}
		\end{minipage}\quad
		\begin{minipage}{0.4\textwidth}
		\centering
		  \footnotesize	\begin{align*}
	\begin{matrix}
	0  &(0,1,0,2) & \val(-a-b+c)\\
	1  &(1,1,0,1) & \val(b+g)\\
	2  &(1,0,1,1) & \val(-a-b-c)\\
	3  &(0,2,0,1) & \val(d-g)\\
	4  &(0,1,1,1) & \val(a+c-d)\\
	5  &(2,1,0,0) & \val(2a)\\
	6  &(2,0,1,0) & \val(b-g)\\
	7  &(1,2,0,0) & \val(-2a)\\
	8  &(1,1,1,0) & \val(d+g)\\
	9  &(1,0,2,0)& \val(a-c-d)\\
	\end{matrix}
	\end{align*}
	\normalsize
	\captionof{table}{Weights and vertices for a cubic in Brundu-Logar normal form}\label{fig:tabweightsandvertices}
		\end{minipage}{}
	\end{figure}
Dehomogenized with respect to $x_4$ and embedded in the tetrahedron $3\Delta_3$ this polytope is shown in Figure \ref{fig:Newt(f)}.
	
	The weight vector induced by a cubic in Brundu-Logar normal form with $P=(a,b,c,d,g)\in\mathbb{P}^4\setminus\Sigma$ is
	\begin{align}
	\omega_P=(&\val(-a-b+c),\val(b+g),\val(-a-b-c),\val(d-g),\val(a+c-d),\nonumber\\&\val(2a),\val(b-g),\val(-2a),\val(d+g),\val(a-c-d)).\nonumber
	\end{align}

\begin{rem}
	By definition, a tropical variety $V(\trop{f})$ is smooth if and only if the dual subdivision of $\text{Newt}(f)$ is unimodular \cite[Section 4.5]{MS15}.
	For all coefficients of $P\in\mathbb{P}^4\setminus\Sigma$ not zero, we have $\text{vol}(\text{Newt}(f_P))=\frac{5}{3}$,
	so a unimodular triangulation contains exactly $10$ tetrahedra. 
\end{rem}
	By \cite[Theorem 1.4]{BL98}, every classical Brundu-Logar normal form is smooth. However, the conditions on the weights to a cubic in Brundu-Logar normal form do not allow unimodular triangulations.
	
	\begin{thm}\label{theorem:val(2)=0}
		If $\val(2)=0$, the tropicalization of any cubic in Brundu-Logar normal form is not tropically smooth.
	\end{thm}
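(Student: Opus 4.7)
The plan is to show that for any parameter $P = (a,b,c,d,g) \in \mathbb{P}^4 \setminus \Sigma$, the regular subdivision of $\Newt{f_P}$ induced by $\omega_P$ fails to be a unimodular triangulation when $\val(2) = 0$.

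A first step is to understand the combinatorial structure of $\Newt{f_P}$. Inspecting the ten listed lattice points, two of them --- $M_b := (1,1,0,1)$ (coefficient $b+g$) and $M_d := (1,1,1,0)$ (coefficient $d+g$) --- are not vertices of the polytope but lie as midpoints of the edges joining $V_1 = (2,1,0,0)$ to $V_6 = (0,1,0,2)$ and $V_2 = (1,2,0,0)$ to $V_4 = (1,0,2,0)$, respectively. For a unimodular triangulation of $\Newt{f_P}$ with all ten marked points as vertices, each midpoint must appear as a strict vertex of the lower envelope, imposing the midpoint inequalities $2\val(b+g) < \val(2a) + \val(-a-b+c)$ and $2\val(d+g) < \val(-2a) + \val(a-c-d)$. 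Moreover, the hypothesis $\val(2) = 0$ reduces $\val(\pm 2a)$ to $\val(a)$ and gives $\omega(V_1) = \omega(V_2) = \val(a)$, so the edge $V_1V_2$ is horizontal in the lift.

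Next, I would examine the two 2-faces of $\Newt{f_P}$ containing the edge $V_1V_2$: the quadrilateral $F_{x_3=0} = \conv{V_1,V_2,V_5,V_6}$ with $M_b$ on the edge $V_1V_6$, and the quadrilateral $F_{x_4=0} = \conv{V_1,V_2,V_3,V_4}$ with $M_d$ on $V_2V_4$. A determinant computation shows that the triangles $V_1V_2V_6$ and $V_1V_2V_4$ each have lattice volume $2$, hence are not unimodular; so in any unimodular triangulation, the triangle sharing the horizontal edge $V_1V_2$ on $F_{x_3=0}$ must have its third vertex in $\{M_b, V_5\}$, and the analogous triangle on $F_{x_4=0}$ must have its third vertex in $\{M_d, V_3\}$. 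Each of these two faces admits exactly three combinatorially valid unimodular triangulations --- fans centered at the corresponding midpoint or at one of the two far vertices.

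The main step is then a finite case analysis over the admissible combinations of fan triangulations of $F_{x_3=0}$ and $F_{x_4=0}$, together with the possible link of the horizontal edge $V_1V_2$ inside a three-dimensional unimodular triangulation. For each admissible combination, the regularity conditions translate into a system of inequalities on the Brundu-Logar weights; I would then show that this system is incompatible with the two midpoint inequalities above, using the non-vanishing factors of $\sigma$ (such as $a+b-c$, $a-c-d$, $4ac-g^2$, and the quartic factors) together with standard ultrametric identities such as $\val(b+g) + \val(b-g) = \val(b^2 - g^2)$. The main difficulty is this case analysis: each case may require a distinct algebraic identity among the Brundu-Logar coefficients. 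The hypothesis $\val(2) = 0$ is essential throughout: without it, $V_1$ and $V_2$ no longer have equal weights, the coupling between the two pentagonal faces breaks, and admissible Brundu-Logar weight vectors could in principle produce a unimodular triangulation.
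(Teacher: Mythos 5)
Your structural analysis of $\Newt{f_P}$ is accurate: the points $(1,1,0,1)$ and $(1,1,1,0)$ are indeed midpoints of edges of the polytope, the two midpoint inequalities are correct necessary conditions for a unimodular triangulation, $\val(2)=0$ does force $\omega(V_1)=\omega(V_2)=\val(a)$, and each of the two quadrilateral $2$-faces admits exactly the three fan triangulations you describe. The gap is that the decisive step --- the ``finite case analysis'' showing every admissible link of $V_1V_2$ is irregular --- is only announced, and the constraints you actually write down are not by themselves contradictory. For instance, take $\val(a)=\val(c)=\val(d)=0$ and $\val(b)=\val(g)=-1$ with no cancellations: then $2\val(b+g)=-2<\val(2a)+\val(-a-b+c)=-1$ and $2\val(d+g)=-2<\val(-2a)+\val(a-c-d)=0$, so both midpoint inequalities hold. (That weight vector still fails to be unimodular, because six lattice points share the minimal weight $-1$, but this obstruction is invisible to the inequalities you derived.) Moreover, some obstructions do not live in the two coordinate facets you single out: when $\val(a)=\val(c)<\val(b),\val(d),\val(g)$ and $a-c$ cancels, the offending face is the planar quadrilateral on $(1,0,1,1),(0,1,1,1),(2,1,0,0),(1,2,0,0)$, which you could only detect through the full three-dimensional link of $V_1V_2$; it is not evident that this reduces to the kind of algebraic identities among the Brundu--Logar coefficients you invoke. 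So as written the argument is a plausible programme, not a proof.

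For comparison, the paper's proof sidesteps the link analysis with a more global observation: the set of lattice points attaining the minimal weight is the projection of the lower face exposed by the vertical functional, hence spans a face of the regular subdivision, which in a unimodular triangulation must be a unimodular simplex with no extra lattice points. One then runs a case distinction on which of $\val(a),\dots,\val(g)$ are minimal and which cancellations occur --- using $\val(2)=0$ both to get $\val(\pm 2a)=\val(a)$ and to rule out simultaneous cancellation in $a+c$ and $a-c$ --- and checks that the minimal-weight set always has at least five points, or consists of four coplanar points, or spans a tetrahedron with a lattice point in the relative interior of an edge. If you want to complete your argument, either carry out the enumeration of admissible links of $V_1V_2$ with explicit regularity inequalities in each case, or replace that step by this minimal-weight-face criterion; you should also address the degenerate situations where some coefficients of $f_P$ vanish, since your description of $\Newt{f_P}$ assumes all ten monomials are present.
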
	
	In particular, this statement holds for most choices of fields with a valuation $(K,\val).$
	\begin{proof}
		The idea is to make a case distinction of which combinations of the parameters in $P=(a,b,c,d,g)\in\mathbb{P}^4\setminus\Sigma$ have minimal valuation. In every case, we obtain either that too many vertices have minimal valuation, that $4$ vertices contained in a plane have minimal valuation or that the $4$ vertices of minimal valuation form a tetrahedron of size larger than $\frac{1}{6}$. These arguments are valid independent of $P$ containing zeros or coordinates of $\omega_P$ being $\infty$ and thus of the exact volume of $\text{Newt}(f_P)$. In the proof we classify all possible cases. For each case we find obstructions to unimodular triangulations. 
		For brevity, we abstain from listing all the cases; rather we show the concept by an exemplary case:\\
		Let $\val(a)=\val(c)<\val(b),\val(d),\val(g)$. \\ For the numeration of the vertices see Table \ref{fig:tabweightsandvertices}.
		Since on the level of minimal valuations we can only have cancellations in either $a+c$ or $a-c$, at least one of $\val(-a-b+c)$ and $\val(-a-b-c)$ has value $\val(a)$, similar with $\val(a-d+c)$ and $\val(a-d-c)$. If $a$ and $c$ have no cancellation, we obtain that the six vertices 0, 2, 4, 5, 7, 9 have the same minimal valuation. But a subdivision of a polytope in $\mathbb{R}^3$ can only be unimodular if a maximum of $4$ vertices have minimal valuation. We need to have cancellations:
		\begin{enumerate}
			\item Cancellation in $a-c$.\\
			It follows that $\val(a)=\val(c)=\val(a+c).$
			Therefore, the vertices with numbers 2, 4, 5, 7 have same minimal valuation. However, these $4$ vertices are contained in a plane and therefore induce subpolytopes in the subdivision which are not unimodular. 
			\item Cancellation in $a+c$.\\ We obtain $\val(a)=\val(c)=\val(a-c).$ So the vertices with numbers 0, 5, 7, 9 form a polytope in the triangulation. However, this tetrahedron is too big, since the edge from vertex number 0 to 5 contains another lattice point in its interior.
		\end{enumerate}
	\end{proof}	
 
In contrast to the octanomial model presented in \cite{PSS19}, the tropicalized Brundu-Logar normal form is not smooth for most choices of valuated fields. Additionally in Example \ref{ex:1} we will see that an analogous statement to \cite[Theorem 3.4]{PSS19} on the bound $1$ for lifting multiplicities does not hold for the tropicalized Brundu-Logar normal form.

%%%%%%%%%%%%%%%%%%%%%%%%%%%%%%%%%%%	
	\subsection{Tropicalizing the 27 lines in the Brundu-Logar normal form}\label{sec:BLNF_lines}
	
	For a cubic $f_P$ in Brundu-Logar normal form, \cite[Table 2]{BL98} gives the Pl\"ucker coordinates of the 27 lines in terms of $(a,b,c,d,e,f)$, where $e$ and $f$ satisfy $g=e+f$, $ac=ef$. 
	
	This offers a new perspective on working the problem of relative realizability: tropicalizing the coordinates we can conclude which of the tropical lines in the tropicalized cubic surface in Brundu-Logar normal form are realizable. Further, for any cubic we can determine the transformation bringing it into normal form, apply its inverse to the $27$ lines and obtain analogous information.
	
	Table \ref{tab:plückercoordinates} shows the tropicalized Pl\"ucker coordinates. We can obtain the vertices of tropical lines from their Pl\"ucker coordinates; see \cite{Ge19}, 
	\cite[Example 4.3.19]{MS15}.
	For shorter notation, we write $v_q$ instead of $\val(q)$.
	\scriptsize	\setlength{\tabcolsep}{3pt}
	\begin{longtable}{ll}	
		$\trop{E_1}  $ &$=[\infty,\infty,0,\infty,\infty,\infty] $\\
		$\trop{G_4} $ & $=[\infty,\infty,\infty,\infty,\infty,0] $\\ 
		$\trop{E_2} $ & $=[\infty,\infty,\infty,0,\infty,\infty] $\\
		$\trop{G_3} $ & $=[0,0,\infty,\infty,0,0] $\\
		$\trop{E_3} $ & $=[\infty,0,0,0,0,\infty] $\\
		$\trop{G_5} $ & $=[2v_c,v_c+v_e,\infty,\infty,v_c+v_e,2v_e] $\\
		$\trop{G_6} $ & $=[2v_c, v_c+v_f,\infty,\infty,v_c+v_f,2v_f] $\\
		$\trop{F_{24}} $ & $=[\infty,\infty,\infty,v_{bc-c^2+ef},v_{-cd+ef+c^2},v_c+v_{e+f-d}] $\\
		$\trop{F_{14}} $ & $=[\infty,v_{bc+c^2+ef},v_{-c^2-cd+ef},\infty,\infty,v_{-b+e+f}+v_c] $\\
		
		$\trop{F_{34}} $ & $=[\infty,v_{-bc+c^2-ef},v_{c^2+cd-ef},v_{-bc+c^2-ef},v_{c^2+cd-ef},v_c+v_{d+b}] $\\
		
		$\begin{aligned}\trop{F_{13}}\end{aligned} $ & $\begin{aligned}=[&v_{bc-c^2+ef},v_{bc-c^2+ef},v_{c-e}+v_{c-f},\infty,v_{bc-ce-cf+2ef},v_{bc-ce-cf+2ef}] \end{aligned}$\\
		
		$\begin{aligned}\trop{F_{15}}\\ \,\end{aligned} $ & $\begin{aligned}=[&v_c+v_{bc-c^2+ef},v_e+v_{bc-c^2+ef},v_{c-e}+v_{cd-cf-ef},\infty,2v_c+v_{e-f-b},\\&v_c+v_e+v_{e-f-b}]\end{aligned} $\\
		$\begin{aligned}\trop{F_{16}} \\ \,\end{aligned}$ & $\begin{aligned}=[&v_c+v_{bc-c^2+ef},v_f+v_{bc-c^2+ef},v_{c-f}+v_{cd-ce-ef},\infty,2v_c+v_{f-e-b},\\&v_c+v_f+v_{f-e-b}] \end{aligned} $\\
		$\begin{aligned}\trop{F_{25}} \\ \,\end{aligned}$ & $\begin{aligned}=[&v_{c^2+cd-ef}+v_c,v_{d-e+f}+2v_c,\infty,v_{c+e}+v_{bc-cf+ef},v_{c^2+cd-ef}+v_e,\\&v_{d-e+f}+v_c+v_e] \end{aligned} $\\
		$\begin{aligned}\trop{F_{26}}\\ \,\end{aligned} $ & $\begin{aligned}=[&v_{c^2+cd-ef}+v_c,v_{d-f+e}+2v_c,\infty,v_{c+f}+v_{bc-ce+ef},v_{c^2+cd-ef}+v_f,\\&v_{d-f+e}+v_c+v_f] \end{aligned}$\\
		
		$\begin{aligned}\trop{F_{23}} \\ \,\end{aligned}$ & $\begin{aligned}=[&v_{c^2+cd-ef},v_{-cd+ce+cf+2ef},\infty,v_{c+e}+v_{c+f},v_{-c^2-cd+ef},\\&v_{cd-ce-cf-2ef}] \end{aligned}$\\
		
		$\begin{aligned}\trop{F_{35}} \\ \,\end{aligned}$ & $\begin{aligned}=[&v_c +v_{bc-cd+2ef},v_{c^2f-c^2d+bce+e^2f},v_{c-e}+v_{cd-cf-ef},\\&v_{c+e}+v_{bc-cf+ef},v_{c^2f-bc^2-cde+e^2f},v_c+v_e+v_{2f-d-b}] \end{aligned}$\\
		
		$\begin{aligned}\trop{F_{36}} \\ \,\end{aligned} $ & $\begin{aligned}=[&v_c +v_{bc-cd+2ef},v_{c^2e-c^2d+bcf+ef^2},v_{c-f}+v_{cd-ce-ef},\\&v_{c+f}+v_{bc-ce+ef},v_{c^2e-bc^2-cdf+ef^2},v_c+v_f+v_{2e-b-d}]  \end{aligned}$\\
		
		$\begin{aligned}\trop{E_5} \\ \,\end{aligned} $ & $\begin{aligned}=[&\infty,v_{c-f}+v_{bc-cf+ef}+v_{cd-cf-ef},v_{f-c}+2v_{cd-cf-ef},v_{c+f}+2v_{bc-cf+ef},\\&v_{c+f}+v_{bc-cf+ef}+v_{cd-cf-ef},v_2+v_{bc-cf+ef}+v_{cd-cf-ef}+v_f]  \end{aligned}$\\
		
		$\begin{aligned}\trop{E_6} \\ \,\end{aligned} $ & $\begin{aligned}=[&\infty,v_{c-e}+v_{bc-ce+ef}+v_{cd-ce-ef},v_{e-c}+2v_{cd-ce-ef},v_{c+e}+2v_{bc-ce+ef},\\&v_{c+e}+v_{bc-ce+ef}+v_{cd-ce-ef},v_2+v_{bc-ce+ef}+v_{cd-ce-ef}+v_e] \end{aligned} $\\
		
		$\begin{aligned}\trop{F_{56}}  \\ \, \\ \,\end{aligned}$ & $\begin{aligned}=[&2v_c+v_{bc-cd+2ef},v_{-bc^2d+bc^2f+bc^2e-c^2ef+fe^2c-cdef+bcef+f^2ec+e^2f^2},\\&v_{cd-ce-ef}+v_{cd-cf-ef},v_{bc-cf+ef}+v_{bc-ce+ef},\\&v_{c^2ef-c^2df-c^2de+bc^2d+fe^2c+f^2ec+cdef-bcef-e^2f^2},v_{bc-cd+2ef}+v_e+v_f] \end{aligned} $	\\
		
		$\begin{aligned}\trop{E_4}  \\ \,\\ \,\end{aligned}$ & $\begin{aligned}=[&v_2+v_{c^2+cd-ef}+v_{bc-c^2+ef}+2v_c,v_{c^2+cd-ef}+v_{bc-c^2+ef}\\&+v_{c^2+ce+cf-ef},2v_{c^2+cd-ef}+v_{c-e}+v_{c-f},v_{c+e}+v_{c+f}+2v_{bc-c^2+ef},\\&v_{c^2+cd-ef}+v_{c^2-ce-cf-ef}+v_{bc-c^2+ef},v_2+v_{c^2+cd-ef}+v_{bc-c^2+ef}+v_e+v_f]  \end{aligned}$\\
		
		$\begin{aligned}\trop{F_{45}} \\ \,\\ \,\end{aligned} $ & $\begin{aligned}=[&\infty ,v_{c+f}+v_{c-f}+v_{cd-cf-ef}+v_{bc-c^2+ef},2v_{c-f}+v_{c^2+cd-ef}+v_{cd-cf-ef},\\&2v_{c+f}+v_{bc-cf+ef}+v_{-bc+c^2-ef},v_{c+f}+v_{c-f}+v_{c^2+cd-ef}+v_{bc-cf+ef},\\&v_{c+f}+v_{c-f}+v_{bc^2+bcf+c^2d-2c^2f-cdf+2ef^2}+v_e]  \end{aligned}$\\	
		
		$\begin{aligned}\trop{F_{46}} \\ \,\\ \,\end{aligned}$ & $\begin{aligned}=[&\infty ,v_{c+e}+v_{c-e}+v_{cd-ce-ef}+v_{bc-c^2+ef},2v_{c-e}+v_{c^2+cd-ef}+v_{cd-ce-ef},\\&2v_{c+e}+v_{bc-ce+ef}+v_{-bc+c^2-ef},v_{c+e}+v_{c-e}+v_{c^2+cd-ef}+v_{bc-ce+ef},\\&v_{c+e}+v_{c-e}+v_{bc^2+bce+c^2d-2c^2e-cde+2e^2f}+v_f]  \end{aligned}$\\
		
		$\begin{aligned}
		\trop{G_1}\\
		\,\\
		\,\\
		\,
		\end{aligned} $ & $\begin{aligned}
		=[&2v_{bc-c^2+ef}+v_{bc-cd+2ef},v_{bc-c^2+ef}+v_{bc-ce-cf+2ef}+v_{bc-cd+2ef},\infty,\\&v_2+v_{bc-c^2+ef}+v_{bc-cf+ef}+v_{bc-ce+ef},\\&v_{bc-c^2+ef}+v_{b^2c^2+bc^2d-bc^2e-bc^2f-c^2de-c^2df+2c^2ef+2cdef-2e^2f^2},\\&v_{bc-ce-cf+2ef}+v_{b^2c^2+bc^2d-bc^2e-bc^2f-c^2de-c^2df+2c^2ef+2cdef-2e^2f^2}] \end{aligned}$\\
		
		$\begin{aligned}\trop{F_{12}}  \\ \, \\ \, \\ \, \\ \,\end{aligned}$ & $\begin{aligned}=[&2v_{bc-cd+2ef}+ v_{c^2+cd-ef}+v_{-bc+c^2-ef},v_{bc-cd+2ef}\\&+v_{-2bcef-2e^2f^2+(-b-d+2e)c^2f+(b+d)(d-e)c^2}+v_{bc-c^2+ef},\infty,\infty,\\&v_{c^2+cd-ef}+v_{bc-cd+2ef}+v_{2cdef-2e^2f^2-(b+d-2e)c^2f-(-b+e)(b+d)c^2},\\&v_{bc^2d-bc^2e-bc^2f-2bcef+c^2d^2-c^2de-c^2df+2c^2ef-2e^2f^2}\\&+v_{b^2c^2+bc^2d-bc^2e-bc^2f-c^2de-c^2df+2c^2ef+2cdef-2e^2f^2}]  \end{aligned}$\\
		
		$\begin{aligned}\trop{G_2}  \\ \,\\ \,\\ \,\\ \,\end{aligned}$ & $\begin{aligned}=[&2v_{c^2+cd-ef}+v_{bc-cd+2ef},v_{c^2+cd-ef}\\&+v_{bc^2d-bc^2e-bc^2f-2bcef+c^2d^2-c^2de-c^2df+2c^2ef-2e^2f^2},\\&v_2+v_{c^2+cd-ef}+v_{cd-ce-ef}+v_{cd-cf-ef},\infty,v_{c^2+cd-ef}\\&+v_{cd-ce-cf-2ef}+v_{bc-cd+2ef},v_{cd-ce-cf-2ef}\\&+v_{bc^2d-bc^2e-bc^2f-2bcef+c^2d^2-c^2de-c^2df+2c^2ef-2e^2f^2}]  \end{aligned}$\\
		\caption{\normalsize The tropical Pl\"ucker coordinates of the 27 tropicalized lines on the tropicalized Brundu-Logar normal form 
		}\label{tab:plückercoordinates}
	\end{longtable}
	\normalsize
	
	\begin{rem}\label{rem:lines_in_infty}
	Note that the tropicalizations of the first five lines $E_1,$ $G_4,$ $E_2,$ $G_3$ and $E_3$ are always distinct. These are depicted in Figures \ref{fig:5infties}, \ref{fig:2infties}.	
	
	As $23$ of the tropicalized lines on a tropicalized surface in Brundu-Logar normal form contain at least one  tropical Pl\"ucker coordinate of value $\infty$, here is a short overview how to visualize these lines: with five Pl\"ucker coordinates of value $\infty$ the tropical line is \glqq an edge\grqq of the tetrahedron $T$ modelling the tropical projective space \cite[Chapter 6]{MS15}, as shown in Figure \ref{fig:5infties}. The case of four infinite Pl\"ucker coordinates does not occur naturally in Table \ref{tab:plückercoordinates}; it would lead to a line segment in one of the facets of $T$. Three Pl\"ucker coordinates of value $\infty$ lead to a tropical half line visible in one of the facets of $T$ as in Figure \ref{fig:3infties}. Two Pl\"ucker coordinates of value $\infty$ lead to a classical line in the interior of $T$, corresponding to the middle line segment of a tropical line of infinite length in the tropical surface; see for example Figure \ref{fig:2infties}. One Pl\"ucker coordinate of value $\infty$ leads to a tropical half line visible in the finite parts of the surface. This is depicted for Example \ref{ex:trivsub} in Figure \ref{fig:halflines}.

	\end{rem}
\begin{figure}
    \centering
    \begin{subfigure}{.3\textwidth}
      \includegraphics[width=.9\linewidth]{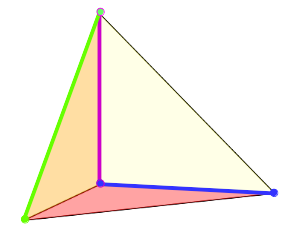}
      \caption{ Lines in $\infty$ \\blue: $\trop{E_1}$,\\ green: $\trop{E_2}$, \\magenta: $\trop{G_4}$}\label{fig:5infties}
    \end{subfigure}
    \begin{subfigure}{.3\textwidth}
      \includegraphics[width=.9\linewidth]{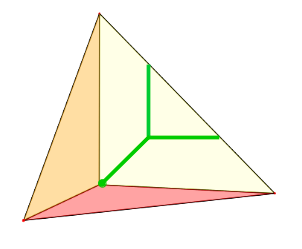}
      \caption{Half line in $\infty$,\\ green: $\trop{F_{14}}$ \\ \ \\ \ }\label{fig:3infties}
    \end{subfigure}
    \begin{subfigure}{.3\textwidth}
      \includegraphics[width=.9\linewidth]{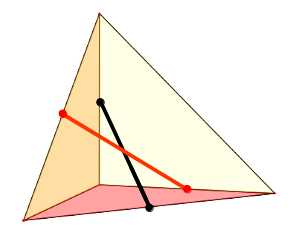}
      \caption{Vertices in $\infty$, \\red: $\trop{G_3}$,\\ black: $\trop{E_3}$ \\ \ }\label{fig:2infties}
    \end{subfigure}    
    \caption{The tropical projective space. The facets are as follows: orange  $x=\infty$, yellow $y= \infty$, red $z=\infty.$}
    \label{fig:infinitelines}
\end{figure}{}	

Since at least $23$ of the tropicalized lines have Pl\"ucker coordinates of value $\infty$, at most $4$ classical lines on $S_p$ can have tropicalizations fully visible on $\trop{S_P}.$
In comparison, of the $27$ lines on a cubic in octanomial form, only one classical line has finite tropical Pl\"ucker coordinates \cite{PSS19}.

For a given surface $S_P$ in Brundu-Logar normal form we can compute all completely visible tropical lines on $\trop{S_P}$ using the \texttt{polymake} extension \texttt{a-tint} by Simon Hampe \cite{Ha14,polymake:2000}, and compare which come from a classical line on $S_P$. 

This is carried out for a sample cubic in Example \ref{ex:trivsub}. 
We obtain higher lifting multiplicities in this setting than for the octanomial form \cite{PSS19}. Example \ref{ex:1} shows that for the Brundu-Logar normal form even isolated lines can have higher lifting multiplicities.

%%%%%%%%%%%%%%%%%%%%%%%%%%%%%%%%%%%%%%%%%%%%%%%%%%%%%%%%%%%%%%%%%%%%%%%%%%	
\section{Examples}\label{sec:ex}
In this section we show some examples on the theory presented in Sections \ref{sec:relreal} and \ref{sec:BLNFandtrop}. We start with two examples on the relative realizability of families of motif 3J and 3I-1 indicating positive answers to Questions~\ref{que:3I-1} and \ref{que:3J}. We conclude with examples investigating the behavior of lines on the tropicalized Brundu-Logar normal form.

\subsection{Relative Realizability}
 
	\begin{exa}[Lines of motif 3J]\label{ex:3J} This example indicates that Question~\ref{que:3J} might be answerable with yes. In fact it shows a cubic surface on which a member of a family of motif 3J is realizable.
	
	We investigate the cubic surface detected by Hampe and Joswig in \cite{HJ17} containing two families of motif 3I and one of motif 3J. We choose the weight vector $\omega$ as below, which induces a surface containing no lines of non-general motifs and can therefore be considered as generic:
$$\omega:=(143,0,64,122,0,2,0,15,55,107,36,23,39,16,14,48,12,12,49,95).$$
		Using the Magma script \cite{Se19} by Emre Sertoz written for \cite{PSS19} we can compute the $27$ lines on a simple lift of the tropical cubic over an $p$-adic field. 
		A simple lift is a cubic where the coefficients $c_i$ are given as $t^{\omega_i}$, where $t$ is either a prime if we are over the $p$-adics, or $t\in\mathbb{C}\{\{t\}\}$ over the Puiseux series. 
		The following table shows the tropicalized Pl\"ucker coordinates of the computed $27$ lines for $p=2$ matching those for $p=5$, indicating that $p=2$ does not have a special role in the lifting behavior of this case. The table also shows the Pl\"ucker coordinates of the tropical lines contained in the tropical surface sorted into isolated lines and families as computed by the \texttt{polymake} extension \texttt{a-tint} \cite{Ha14,HJ17}. 
	\footnotesize{	\setlength{\tabcolsep}{3pt}\begin{longtable}{c|cccccc|c|cccccc|c}
			\text{} & \multicolumn{6}{c|}{\small{tropicalized lines}} & \text{} & \multicolumn{6}{c|}{\small{tropical lines}} & \small{motif}\\
			\hline 
			\endfirsthead   
			\text{} & \multicolumn{6}{c|}{tropicalized lines} & \text{} & \multicolumn{6}{c|}{tropical lines} & motif\\
			\hline 
			\endhead
			$1$& $0$ &$-40$& $-86$& $37$& $3$ & $-49$  & \multirow{20}{*}{\rotatebox{270}{\scriptsize{isolated}}} &	$0$ &$-40$& $-86$& $37$& $3$ & $-49$ &  3B            \\
			$2$&  $0$ & $-3$& $-49$& $0$ &$-37$& $-49$ &  &$0$ & $-3$& $-49$& $0$ &$-37$& $-49$    & 3A  \\ 
			$3$&  $0$ &$22$ &$0$ &$-12$& $-12$& $-12$ &  & $0$ &$22$ &$0$ &$-12$& $-12$& $-12$ & 3G    \\
			$4$&  $0$& $22$& $0$& $0$& $0$& $0$&  &$0$& $22$& $0$& $0$& $0$& $0$   &   3D      \\
			$5$& $0$ &$2$& $0$& $-12$& $-12$& $-12$ & & $0$ &$2$& $0$& $-12$& $-12$& $-12$  &      3G   \\
			$6$&  $0$& $2$& $0$& $0$& $0$& $0$&  & $0$& $2$& $0$& $0$& $0$& $0$ &     3D           \\
			$7$&  $0$& $-49$& $-86$& $37$& $3$& $-49$ &  & $0$& $-49$& $-86$& $37$& $3$& $-49$ &     3H    \\
			$8$&  $0$& $2$ &$-30$& $-2$& $-48$& $-46$ &   &$0$& $2$ &$-30$& $-2$& $-48$& $-46$ &   3F  \\
			$9$&  $0$& $2$& $-30$& $-11$& $-48$& $-46$& &$0$& $2$& $-30$& $-11$& $-48$& $-46$ &   3F  \\
			$10$&  $0$ &$-12$ &$-12$& $46$& $0$ &$-12$ &  & $0$ &$-12$ &$-12$& $46$& $0$ &$-12$ &    3G     \\
			$11$&  $0$ &$-12$ &$-12$ &$37$ &$0$ &$-12$ &   & $0 $&$-12$ &$-12$ &$37$ &$0$ &$-12$&3G     \\
			$12$&  $0$ &$-12$ &$-49$ &$9$ &$-37$& $-49$ & & $0$ &$-12$ &$-49$ &$9$ &$-37$& $-49$ &   3A   \\
			$13$&  $0$ &$0$& $-36$& $-2$& $-48$ &$-48$ &   &$0$ &$0$& $-36$& $-2$& $-48$ &$-48$&  3D  \\
			$14$& $0$& $0$& $-36$& $-11$& $-48$& $-48$&    & $0$& $0$& $-36$& $-11$& $-48$& $-48$ &   3D  \\
			$15$& $0$& $22$& $-12$& $-2$& $-48$& $-26$ &  & $0$& $22$& $-12$& $-2$& $-48$& $-26$&    3D   \\
			$16$&  $0$ &$22$ &$-12$ &$-11$ &$-48$ &$-26$& &$0$ &$22$ &$-12$ &$-11$ &$-48$ &$-26$ &  3D   \\
			$17$&  $0$ &$22$& $0$& $46$ &$0$ &$22$&   & $0$ &$22$& $0$& $46$ &$0$ &$22$&  3D    \\
			$18$&  $0$ &$22$& $0$ &$37$& $0$ &$22$ &    & $0$ &$22$& $0$ &$37$& $0$ &$22$&   3D      \\
			$19$&  $0$ &$0$ &$0$ &$46$& $0$& $0$ &   &$0$ &$0$ &$0$ &$46$& $0$& $0$ &     3D      \\
			$20$& $0$& $0$& $0$& $37$& $0$ &$0$&   &$0$& $0$& $0$& $37$& $0$ &$0$ &     3D      \\
			$21$&  $0$ &$2$& $0$& $46$ &$0$ &$2$ &   & $0$ &$2$& $0$& $46$ &$0$ &$2$ &      3D  \\
			$22$&  $0$ &$2$ &$0$ &$37$& $0$& $2$ &   & $0$ &$2$ &$0$ &$37$& $0$& $2$ & 3D     \\
			$23$&  $0$ &$20$& $2$& $-16$ &$-16$ &$-14$&   & $0$ &$20$& $2$& $-16$ &$-16$ &$-14$& 3H	\\
			$24$& $0$ &$-61$& $-107$& $52$& $9$& $-55$ &  &$0$ &$-61$& $-107$& $52$& $9$& $-55$&3H		\\ 
			$25$&  $0$& $-55$& $-55$& $49$ &$3$ &$-52$ &    & $0$& $-55$& $-55$& $49$ &$3$ &$-52$& 3H\\
	        $26$&	$0$& $-55$& $-92$& $49$& $3$ &$-52$&  &$0$& $-55$& $-92$& $49$& $3$ &$-52$& 3H	\\
				\hline
			$27$&$0$ &$0$ &$0$& $0$& $0$& $2$&  \multirow{3}{*}{\rotatebox{270}{\scriptsize{fam.}}}  &$0$ &$0$&$0$ &$0$ &$0$ &$t$ & 3J, \scriptsize{$t\geq 0$} 		\\
			& & & & & & &    &$12+t$& $0$& $0$& $12$& $12$& $0$& 3I-2, \scriptsize{$t\geq0$} 	\\
			& & & & & & &             &$12+t$ &$12$ &$12$ &$0$&$0$ &$0$ & 3I-2, \scriptsize{$t\geq0$} 	\\
			\caption{The tropicalized lines compared with the tropical lines on the tropicalization of a simple lift over $\mathbb{Q}_2$ to the weight vector $\omega$}
		\end{longtable}}
\normalsize	
\vspace{-1pt}
		Investigating the dual subdivision of the surface, we see that the two families of motif 3I have type 3I-2. As proven in Theorems \ref{theorem:relreal} and \ref{theorem:deg3I} no member of these families lifts. In particular, all the 27 lines on this simple lift tropicalize to non-degenerate lines. It is notable that all the 26 isolated lines are realizable on our chosen lift of the surface. Furthermore, for this example the statement from \cite[Conjecture 4.1]{PSS19} holds: the tropicalizations of the $27$ lines are distinct.
	
		Notice that in this example the member for $t=2$ of the family of motif 3J lifts, supporting the idea that we have a different lifting behavior for families of motif 3J than for motif 3I; see Question \ref{que:3J}.

	\end{exa}
	\begin{exa}[Lines of type 3I-1 over $\text{char}(\Bbbk)=2$]\label{ex:3I-1} We consider an example of a smooth tropical cubic surface containing a family of type 3I-1 and its lifting behavior over $\mathbb{Q}_2$ and $\mathbb{Q}_3$. The results suggest Question \ref{que:3I-1} might be answered positively and Proposition \ref{cor:relreal} might be extendable to characteristic $p$. We also compare the lifting of the lines with the corresponding Brundu-Logar normal form to the chosen surface.
	
		Using the Magma script 
		from \cite{Ku19}, we can calculate the Brundu-Logar normal form to any generic cubic over the $p$-adics and investigate the behavior of the tropical lines on both sides. We consider the tropical cubic surface given by the weight vector $\nu$ below. This cubic is of interest as it contains a family of type 3I-1. We investigate a simple lift $S_{\nu}$ of this surface over $\mathbb{Q}_2$, giving an example corroborating a positive answer to Question \ref{que:3I-1}.
		$$\nu=( 0 , 12 , 30 , 0 , 30 , 12 ,3, 145, 54, 10, 51, 0 , 18, 123 , 30, 0 , 265 , 150, 80, 21).$$
		The tropicalized Brundu-Logar normal form, $\trop{S^{nf}_{\nu}}$, to this cubic contains $4$ families, numbered $F_i$, $i=0,...,3$. We have the weight vector $$\nu^{nf}=(-10, -6, -6, -10, -10, -10, -8, -10, -10, -9).$$  
		
		\begin{figure}[h]
		    \centering
		    \begin{minipage}{.25\textwidth}
		    \centering
		    \includegraphics[height=0.8\linewidth]{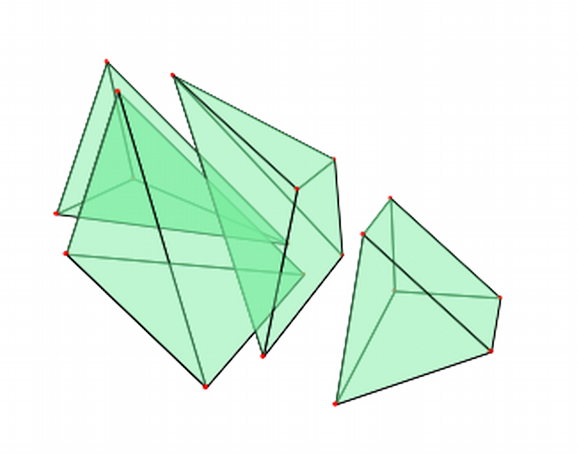}
		      \caption{\\ Subdiv($\trop{S_{\nu}^{nf}}$)}
		    \label{fig:dualsubdiv}
		    \end{minipage}
		    \begin{minipage}{.6\textwidth}
		    \centering
		 \footnotesize	\setlength{\tabcolsep}{3pt}\begin{longtable}{c|cccccc|c}
			\text{} & \multicolumn{6}{c}{tropical Pl\"ucker coordinates} &\text{} \\
			\hline 
			\endfirsthead   
			\text{} & \multicolumn{6}{c|}{Pl\"ucker coordinates} &\text{}\\
			\hline 
			\endhead
			$F_0$&  $0$&  $0$&   $\lambda$ &  $1$ & $0$ & $0$ & $\lambda\geq 0$
		                  \\
		                  \hline
			$F_1$&  $0$&  $1$&   $0$ &  $1$  & $0$ &  $1+\lambda$    & $\lambda\geq 0$          \\ 
		\hline
			$F_2$&  $0$&  $1+\lambda$&   $0$ &$1$  & $0$ &  $1$     & $\lambda\geq 0$     \\
		\hline
			\multirow{2}{*}{	$F_3$}	 & $1-\lambda$&  $0$&  $0$ &  $2-\lambda$ &  $1-\lambda$ &$0$ & $\lambda\in[0,1]$\\
				&  $0$& $\lambda$&  $0$ &  $1$ & $0$& $\lambda$  & $\lambda\in[0,1]$  \\
			\caption{Tropical lines on $\trop{S^{nf}_{\nu}}$}\label{tab:BLNFlines3I-1}
		\end{longtable}	 \end{minipage}
		\end{figure}\normalsize

	Computing the tropicalizations of the lines in the Brundu-Logar normal form $S^{nf}_{\nu}$ (right columns in Table \ref{tab:exHJ}) and their inverse images under the transformation leading to the normal form (left columns), we can investigate the lifting behaviour of the tropical lines on both surfaces.
	
	Investigating the Pl\"ucker coordinates of the tropicalized lines on $\trop{S^{nf}_{\nu}}$, as demonstrated in Table \ref{tab:exHJ}, we obtain two tropicalized lines of higher lifting multiplicity on $\trop{S^{nf}_{\nu}}$: a tropical half line of lifting multiplicity $3$ and a middle line segment of infinite length of lifting multiplicity $4$. The other $20$ lines on $S^{nf}_{\nu}$ have distinct tropicalizations. Comparing Table \ref{tab:BLNFlines3I-1} with the middle right column of Table \ref{tab:exHJ}, we obtain that three inner members of the family $F_0$ lift and that one inner member of family $F_1$ lifts, while no members of $F_2$ and $F_3$ lift. Additionally, $F_0$ approximates the tropicalized line of lifting multiplicity $3$ for $\lambda\to\infty$. 
			\footnotesize{	\setlength{\tabcolsep}{2pt}\begin{longtable}{cccccc|c||cccccc|c}
			\multicolumn{6}{c|}{lines on $\trop{S_{\nu}}$} & \text{lifting?} & \multicolumn{7}{c}{lines on $\trop{S^ {nf}_{\nu}}$} \\
			\hline 
			\endfirsthead   
			 \multicolumn{6}{c|}{lines on $\trop{S_{\nu}}$} & \text{lifting?} & \multicolumn{6}{c}{lines on $\trop{S^ {nf}_{\nu}}$} &\text{}\\
			\hline 
			\endhead
$0$& $0$& $0$& $t$& $0$& $0$& (3I-1), yes $t=1$	&$\infty$&$\infty$& $\infty$& $\infty$& $\infty$& $0$ & (e)	
\\
$0$ &$-40$&$-20$& $-30$ &$-10$& $-40$& yes   & $\infty$& $\infty$& $0$&$\infty$& $\infty$& $\infty$ & (e)	
\\
$0$ &$-123$& $-21$& $-111$&$-9$& $-123$&  yes  &$\infty$&$\infty$& $\infty$& $0$&$\infty$& $\infty$ & (e)	
\\
$0$ &$-41$ &$-21$& $-31$& $-11$& $-41$&yes     &$\infty$& $-1$& $-4$& $\infty$&$\infty$& $-3$ &	(d)	
\\
$0$ &$-123$& $-22$& $-111$& $-9$& $-133$& yes  &$0$& $0$& $82$&$\infty$& $0$& $0$ &	(b)
\\
$0$ &$-45$& $-22$& $-34$& $-10$& $-56$&yes     &$0$& $1$&$\infty$& $1$& $0$& $1$ & (b)
\\
$0$ &$-45$& $-22$& $-34$& $-4$& $-56$&   yes  &$0$& $1$& $0$& $1$&$0$& $7$ & (a), $F_1$:  $\lambda =6$
\\
$0$ &$-48$& $-23$& $-35$& $-10$& $-48$&yes     &$\infty$& $0$& $0$& $0$& $0$& $\infty$ & (c)	
\\
$0$& $-48$& $-23$& $-36$& $-11$& $-58$&  yes  &$0$& $0$& $\infty$& $\infty$& $0$& $0$ &(c)
\\
$0$& $-123$& $-23$& $-111$& $-9$& $-134$&  yes &$0$& $0$& $81$& $1$& $0$& $0$ &	(a), $F_0$: $\lambda=81$
\\
$0$& $-48$& $-23$& $-36$& $-11$& $-48$& yes &$\infty$& $0$& $1$& $0$& $1$& $0$ &(b)
\\
$0$& $-48$& $-23$& $-35$& $-5$& $-58$& yes  &$0$& $0$& $1$& $\infty$& $0$& $0$ &  (b)
\\
$0$& $-127$& $-27$& $-109$& $-11$& $-138$& yes  &$0$& $0$& $\infty$& $1$& $0$& $0$ & (b), $F_0$:  $\lambda\to\infty$ 
\\
$0$& $-128$& $-27$& $-110$& $-10$& $-138$& yes &$0$& $0$& $\infty$&$\infty$& $0$& $0$ &(c)
\\
$0$& $-129$& $-27$& $-111$& $-9$& $-129$& yes &$\infty$& $\infty$& $\infty$& $0$& $0$& $0$ &	(d)
\\
$0$& $-75$& $-30$& $-63$& $-18$& $-86$& yes &$0$& $0$& $\infty$& $1$& $0$& $0$ & (b), $F_0$: $\lambda\to\infty$ 
\\
$0$& $-75$& $-30$& $-63$& $-18$& $-75$&yes  &$\infty$& $20$& $41$& $0$& $21$& $20$ &(b)
\\
$0$& $-75$& $-30$& $-63$& $-18$& $-85$& yes &$0$& $0$& $\infty$& $\infty$& $0$& $0$ &(c)
\\
$0$& $-114$& $-30$& $-102$& $-18$& $-114$&yes &$\infty$& $62$& $122$& $3$& $63$& $62$&(b)
\\
$0$& $-114$& $-30$& $-102$& $-18$& $-124$&yes &$0$& $0$& $\infty$&$\infty$& $0$& $0$ &(c)
\\
$0$& $-114$& $-30$& $-102$& $-18$& $-125$&yes &$0$& $0$& $\infty$&$1$& $0$&$0$&(b), $F_0$: $\lambda\to\infty$
\\
$0$& $-120$& $-36$& $-102$& $-18$& $-131$&yes  &$0$& $0$& $61$& $1$& $0$& $0$ & (a), $F_0$: $\lambda=61$	
\\
$0$& $-81$& $-36$& $-63$& $-18$& $-91$& yes &$0$& $0$& $21$& $\infty$& $0$& $0$&(b)
\\
$0$& $-81$& $-36$& $-63$& $-18$& $-81$& yes &$\infty$& $20$& $20$& $0$& $0$& $0$&(b)
\\
$0$& $-81$& $-36$& $-63$& $-18$& $-92$& yes &$0$& $0$& $22$& $1$& $0$& $0$ &(a), $F_0$: $\lambda=22$
\\
$0$ &$-120$& $-36$& $-102$& $-18$& $-130$& yes &$0$& $0$& $60$&$\infty$& $0$& $0$ &	(b)
\\
$0$& $-120$& $-36$& $-102$& $-18$& $-120$& yes &$\infty$& $59$& $59$& $0$& $0$& $0$ &(b)
\\
$0$&$-48$&$-23$&$-36+t$&$-11$&$-59$& (3I-2), no& & & & & & & \\

\caption{On the last column, cf. Remark \ref{rem:lines_in_infty}:\\ (a) \glqq visible \grqq tropical line, \\(b) tropical half line (see Figure \ref{fig:halflines}), \\(c) middle line segment with vertices in infinity (see Figure \ref{fig:2infties}), \\(d) tropical half line in infinite boundary (see Figure \ref{fig:3infties}), \\(e) tropical line in the infinite boundary (see Figure \ref{fig:5infties}).}\label{tab:exHJ}
		\end{longtable}}
\normalsize	

Using \texttt{polymake}, we obtain that the tropical cubic surface $\trop{S_{\nu}}$ contains $26$ isolated lines. They are all non-degenerate and lift onto the chosen simple lift of the surface. Additionally, the surface contains two families, both of motif 3I. However, an investigation of the dual subdivision shows that one family is of type 3I-1, while the other is of type 3I-2. As proven in Theorems \ref{theorem:relreal} and \ref{theorem:deg3I} the family of type 3I-2 does not lift. Nevertheless, we are in the case that the characteristic of the residue field is equal to $2$, and we observe that for $t=1$ a member of the family of type 3I-1 does lift in this setting.

Comparing these results with a simple lift over $\mathbb{Q}_3$, we obtain that in this case in accordance with Theorem \ref{theorem:relreal} the non-degenerate lines of type 3I-1 do not lift. However, the degenerate line of type 3I-1 lifts, suggesting Proposition \ref{cor:relreal} might hold in different characteristics; see Remark \ref{rem:3I-1}. 
The isolated lines have the same lifting behavior for $p=2$ and $p=3$.

	\end{exa}

\subsection{Lines on the tropicalized Brundu-Logar normal form} 
In the following let $K=\mathbb{C}\{\{t\} \}$ be the field of Puiseux series over $\mathbb{C}$.
\begin{exa}\label{ex:trivsub} This example is tropically very degenerate. Most of the tropicalized lines vanish completely or partly into the infinite boundary, but we observe that some of them can be approximated by families. We also obtain higher lifting multiplicities.

Choose $P=(a,b,c,d,g)\in\mathbb{P}^4$ as $a= \frac{1}{2}(1+t+t^{2})$, $b= t^5$, $c= 2+2t^2$, $d= t^{10}$, $e=1$, $f= 1+t+2t^2+t^3+t^4$, where $g=e+f$.\\ This choice satisfies $ac=ef$ and $\sigma(P)\neq 0$, so $P$ encodes the Brundu-Logar normal form of some smooth cubic surface. These parameters lead to the trivial subdivision of $\text{Newt}(\trop{S_P})$; the dual surface is shown in Figure \ref{fig:surface_trivsub}. 
In this case we identify the tropicalizations of all $27$ lines.
The tropical surface $\trop{S_P}$ contains no isolated lines and 7 families, whose tropical Pl\"ucker coordinates are as follows with $\mu,\lambda\geq 0$:

\footnotesize	\setlength{\tabcolsep}{3pt}\begin{longtable}{c|cccccc}
			\text{} & \multicolumn{6}{c}{Pl\"ucker coordinates}  \\
			\hline 
			\endfirsthead   
			\text{} & \multicolumn{6}{c}{Pl\"ucker coordinates}\\
			\hline 
			\endhead
			$F_0$&  $\mu$&  $0$&  $0$ & $0$ & $0$ & $\lambda$ \\
		                  \hline
			$F_1$&  $0$&   $\mu$&   $0$ &  $0$ &  $\lambda$ & $0$  \\ 
		\hline
			$F_2$&  $0$&  $0$&   $\mu$ &$\lambda$  & $0$ & $0$          \\
		\hline
			\multirow{3}{*}{	$F_3$}	 & $\lambda+\mu$&  $0$&  $0$ &  $\lambda$ &  $\lambda$ &$0$ \\
				& $\lambda$&  $0$&  $0$ &  $\lambda+\mu$ &  $\lambda$ &$0$   \\
				& $\lambda$&  $0$&  $0$ &  $\lambda$ &  $\lambda+\mu$ &$0$      \\
				\hline
				\multirow{3}{*}{	$F_4$}&  $\lambda+\mu$ &   $\lambda$&   $\lambda$ &  $0$ & $0$ &$0$    \\
				&  $\lambda$ &     $\lambda+\mu$&     $\lambda$ &  $0$ & $0$ & $0$  \\
				&  $\lambda$ &     $\lambda$&     $\lambda+\mu$ &  $0$ & $0$ & $0$  \\
					\hline
		\multirow{3}{*}{	$F_5$}&  $0$&  $0$ &    $\lambda+\mu$&$0$ &   $\lambda$ & $\lambda$      \\
		& $0$&  $0$ &    $\lambda$&$0$ &   $\lambda+\mu$ & $\lambda$   \\
		&$0$&  $0$ &    $\lambda$&$0$ &   $\lambda$ & $\lambda+\mu$  \\
			\hline
		\multirow{3}{*}{	$F_6$}&   $0$&$\lambda+\mu$&   $0$&  $\lambda$&  $0$ &$\lambda$     \\
&   $0$&$\lambda$&   $0$&  $\lambda+\mu$&  $0$ &$\lambda$ \\
&   $0$&$\lambda$&   $0$&  $\lambda$&  $0$ &$\lambda+\mu$ \\
			\caption{The tropical Pl\"ucker coordinates of the families on $\trop{S_P}$}
		\end{longtable}
		
		\normalsize
	
The tropical Pl\"ucker coordinates of the $27$ lines are shown in Table~\ref{tab:expluck}. We see that the $27$ lines tropicalize to $14$ distinct lines, of which only one has finite Pl\"ucker coordinates. The maximal lifting multiplicity observed in this example is $4$; see Table~\ref{tab:expluck}.

By taking the limit of $\lambda$ and/or $\mu\to\infty$ while fixing the other parameter accordingly,
we can approximate some tropicalized lines as an infinite border point of some of the families; see last column of Table~\ref{tab:expluck}.
The lines $E_1,G_4,E_2$, which are completely in infinity, as shown in Figure \ref{fig:infinitelines},
are no limit of any of the families. The tropical lines $\trop{F_{15}}=\trop{F_{16}}$ and $\trop{F_{25}}=\trop{F_{26}}$ have one finite vertex; the other one is in infinity, cf. Remark \ref{rem:lines_in_infty}. So only a tropical half line is visible; see Figure~\ref{fig:halflineF15}-\ref{fig:halflineF25}. This also does not appear as a limit of the families in this case.\\
\
\\

	\footnotesize	\setlength{\tabcolsep}{3pt}\begin{longtable}{c|cccccc|c}
			\text{} & \multicolumn{6}{c|}{Pl\"ucker coordinates} &  \small{} \\
			\hline 
			\endfirsthead   
			\text{} & \multicolumn{6}{c|}{Pl\"ucker coordinates} &  \small{}\\
			\hline 
			\endhead
			$\trop{E_1}$&  $\infty$&   $\infty$&   $0$ &  $\infty$ &  $\infty$ & $\infty$ & \multirow{3}{*}{not approximable}
		                  \\
		  \cline{1-7}
			$\trop{G_4}$&  $\infty$&   $\infty$&   $\infty$ &  $\infty$ &  $0$ & $\infty$ &                \\ 
		 \cline{1-7}
			$\trop{E_2}$&  $\infty$&  $\infty$&   $\infty$ & $0$ & $\infty$ &  $\infty$ &          \\
			\hline
				$\trop{E_3}$& $\infty$&  $0$&  $0$ &  $0$ &  $0$ & $\infty$ & $F_0$ with $\lambda,\mu \to \infty$      \\
				\hline
					$\trop{F_{24}}$&  $\infty$ &     $\infty$&    $\infty$ &  $0$ & $0$ & $0$ &  $F_4$ with $\lambda \to \infty$    \\
					\hline
			$\trop{F_{14}}$&   $\infty$&  $0$&  $0$ &    $\infty$ &   $\infty$ & $0$ &     $F_3$ with $\lambda\to\infty$      \\
			\hline
			$\trop{F_{34}}$&    $\infty$&   $0$&  $0$ &  $0$ & $0$ & $5$ &   $F_0$ with $\mu\to\infty$, $\lambda=5$   \\
			\hline
					$\trop{F_{13}}$&  \multirow{2}{*}{$0$}&    \multirow{2}{*}{$0$}&   \multirow{2}{*}{$0$} &     \multirow{2}{*}{$\infty$} & \multirow{2}{*}{$0$} &  \multirow{2}{*}{$0$} &    $F_2$ with $\lambda\to\infty$, $\mu=0$   \\
			$\trop{G_2}$&   &   &  &     &  &  &     $F_3$ (b) and $F_6$ (b) with $\lambda=0$, $\mu\to\infty$          
			\\
			\hline
			$\trop{F_{15}}$&   \multirow{2}{*}{$0$}&    \multirow{2}{*}{$0$}&   \multirow{2}{*}{$0$} &     \multirow{2}{*}{$\infty$} & \multirow{2}{*}{$1$} &  \multirow{2}{*}{$1$} &    \multirow{4}{*}{not approximable} \\
			$\trop{F_{16}}$&     &  &  &   &  &  &   \\
			\cline{1-7}
			$\trop{F_{25}}$&  \multirow{2}{*}{$0$}&    \multirow{2}{*}{$1$}&   \multirow{2}{*}{$\infty$} &     \multirow{2}{*}{$0$} & \multirow{2}{*}{$0$} &  \multirow{2}{*}{$1$} &       \\
			$\trop{F_{26}}$&  &  &   &  & &  &               \\
			\hline
			$\trop{F_{23}}$&  \multirow{2}{*}{$0$}&    \multirow{2}{*}{$0$}&   \multirow{2}{*}{$\infty$} &     \multirow{2}{*}{$0$} & \multirow{2}{*}{$0$} &  \multirow{2}{*}{$0$} &     $F_2$ with $\mu\to\infty$, $\lambda=0$              \\
			$\trop{G_1}$&   &   &   &  &  & &   $F_4$ (c) and $F_5$ (a) with $\lambda=0$, $\mu\to\infty$		\\
				\hline
			$\trop{G_3}$&   \multirow{4}{*}{$0$}&  \multirow{4}{*}{$0$}&  \multirow{4}{*}{$\infty$} & \multirow{4}{*}{$\infty$}& \multirow{4}{*}{$0$}&  \multirow{4}{*}{$0$}&     \multirow{4}{*}{$F_2$ with $\lambda,\mu\to\infty$}    \\
				$\trop{G_5}$&    & &   &  &  & &         \\
			$\trop{G_6}$&    & &   &  &  & &                  \\
			$\trop{F_{12}}$&    & &   &  &  & &   
			\\ \hline
			$\trop{E_5}$&  \multirow{4}{*}{$\infty$}&    \multirow{4}{*}{$0$}&   \multirow{4}{*}{$0$} &     \multirow{4}{*}{$0$} & \multirow{4}{*}{$0$} &  \multirow{4}{*}{$0$} &  \\
			$\trop{E_6}$& &   &   &   &  &   & 	$F_0$ with $\mu\to\infty$, $\lambda=0$   \\
			
			$\trop{F_{45}}$&  &   &   &   &  &   &   $F_3$ (a) and $F_4$ (a) with $\lambda=0$, $\mu\to\infty$	     \\
			$\trop{F_{46}}$&  &   &   &   &  &   &              \\
			\hline
			$\trop{F_{35}}$&  \multirow{4}{*}{$0$}&    \multirow{4}{*}{$0$}&   \multirow{4}{*}{$0$} &     \multirow{4}{*}{$0$} & \multirow{4}{*}{$0$} &  \multirow{4}{*}{$0$} &  \multirow{4}{*}{all families with $\lambda,\mu=0$} \\
			$\trop{F_{36}}$&  &   &   &   &  &   &       \\
			$\trop{F_{56}}$&  &   &  &   &  &   &      \\
			$\trop{E_4}$&  &   &   &   &  &   &    \\
			\caption{The tropical Pl\"ucker coordinates of the tropicalizations of the lines on $S_P$ and their connection to the tropical lines on $\trop{S_P}$ }\label{tab:expluck}
		\end{longtable}
\normalsize
Note, that the \texttt{a-tint} output for the computation of families has to be checked for redundancies.
\end{exa}

\begin{figure}[ht]
\centering
\begin{subfigure}{.32\textwidth}
  \centering
 \includegraphics[height=0.8\textwidth]{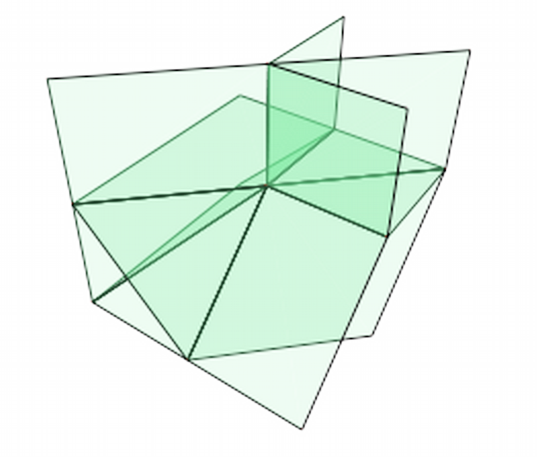}
    \caption{Surface dual to\\ the trivial subdivision}
    \label{fig:surface_trivsub}
\end{subfigure}
   \begin{subfigure}{.32\textwidth}
  \centering
 \includegraphics[height=0.8\textwidth]{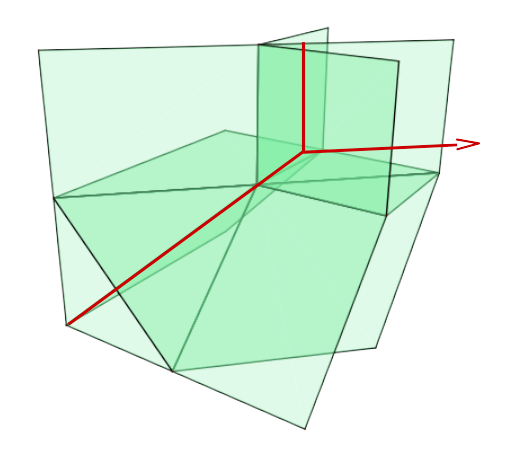} 
    \caption{The tropical half line\\ $\trop{F_{15}}$}
    \label{fig:halflineF15}
\end{subfigure}
   \begin{subfigure}{.32\textwidth}
  \centering
 \includegraphics[height=0.8\textwidth]{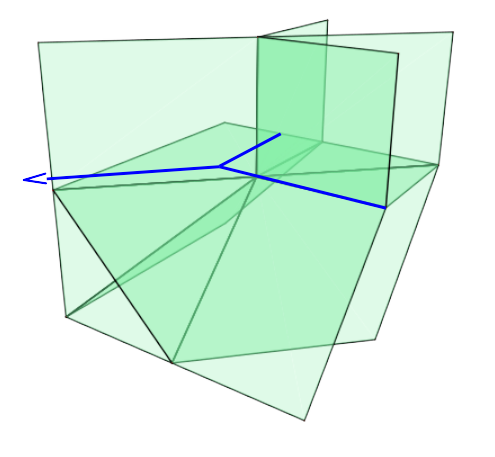} 
    \caption{The tropical half line\\ $\trop{F_{25}}$}
    \label{fig:halflineF25}
\end{subfigure}
\caption{The realizable tropical half lines to Example \ref{ex:trivsub}}\label{fig:halflines}
\end{figure}{}

	\begin{exa}\label{ex:1}
		The concluding example is a subdivision relatively close to a unimodular triangulation: it contains only two polytopes of the too large volume $\frac{1}{3}$. However, we can still observe lifting multiplicities higher than one, but we can no longer approximate lines partly contained in the infinite boundary by families.
		
		We choose the parameters $a = t^{20}$, $b= 1+t-t^6$, $c=-1-t+t^6$, $d=-1-t+t^6+ t^8$, $g= -1 -t+t^6+t^8+t^{15},$
		and obtain the weight vector $w=(0,8,6,15,8,20,0,20,0,0)$, which induces a subdivision with $8$ maximal cells; see Figure \ref{fig:ex1_subdiv}. The corresponding surface has 1 isolated line $L_{iso}$ and 2 families $F_0$ and $F_1$, computed using \texttt{a-tint} in \texttt{polymake}:
	\begin{align*}
	    F_0&=[0,0,8,0,8,12+\lambda],  \lambda\geq 0\\
	    F_1&=[0,0,8,0,8+\lambda,8], \lambda\geq 0\\
	    L_{iso}&= [0,15,15,0,0,20].
	\end{align*}

		The primal motifs of the two families can be seen in Figure \ref{fig:ex1_primalmotifs}. We obtain motifs that are not in the classification of \cite{PaVi19}, since our surface is not tropically smooth. 
		Only the motif of the isolated line, motif 3A, is one that can occur on general smooth tropical cubics.
		
		Computing the tropicalized lines with finite Pl\"ucker coordinates using Table \ref{tab:plückercoordinates}, we obtain that $\trop{F_{36}}=\trop{F_{56}}=L_{iso}$, while $E_4$ tropicalizes to an inner member of $F_0$ given by $\lambda=8$ and $F_{35}$ tropicalizes to the degenerate representative of $F_1$ with $\lambda=0$.
		In this case, we cannot obtain any of the other $23$ lines as limits of the families as $\lambda\to\infty$.	\end{exa}
		\begin{figure}[h]
			\centering
			\begin{subfigure}{.3\textwidth}
  \centering
 \includegraphics[width=0.5\linewidth]{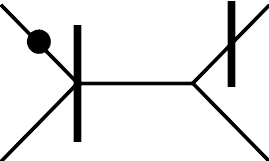} 
    \caption{Primal motif of $F_0$}
\end{subfigure} 
\begin{subfigure}{.3\textwidth}
  \centering
 \includegraphics[width=0.5\linewidth]{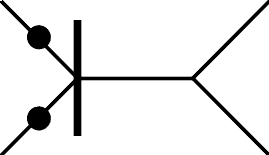} 
    \caption{Primal motif of $F_1$}
\end{subfigure} 
\begin{subfigure}{.3\textwidth}
  \centering
 \includegraphics[width=0.5\linewidth]{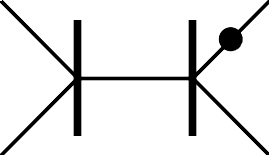} 
    \caption{Primal motif 3A of the isolated line}
\end{subfigure} 
			\caption{Primal motifs of the lines from Example \ref{ex:1}}\label{fig:ex1_primalmotifs}
		\end{figure}

	\begin{figure}[h]
	    \centering
	    \begin{subfigure}{.3\textwidth}
	    \includegraphics[width=.8\linewidth]{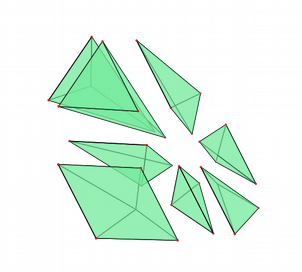}
	    \caption{Dual subdivision }
	    \label{fig:ex1_subdiv}
	\end{subfigure}{}
		\begin{subfigure}{.4\textwidth}
			\centering
			\includegraphics[width=0.7\linewidth]{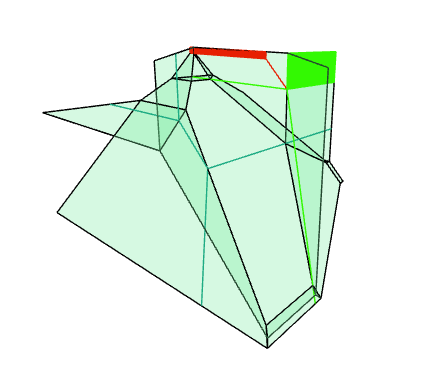}
			\caption{The tropicalized surface with its isolated line and two families.
				$F_0$ in red, $F_1$ in green.}\label{fig:all_lines_ex1}
				\end{subfigure}
				\caption{Dual subdivision and surface to Example \ref{ex:1}}
		\end{figure}

	\bibliography{Bibliographie}

\providecommand{\bysame}{\leavevmode\hbox to3em{\hrulefill}\thinspace}
\providecommand{\MR}{\relax\ifhmode\unskip\space\fi MR }
% \MRhref is called by the amsart/book/proc definition of \MR.
\providecommand{\MRhref}[2]{%
  \href{http://www.ams.org/mathscinet-getitem?mr=#1}{#2}
}
\providecommand{\href}[2]{#2}
\begin{thebibliography}{10}

\bibitem{BK12}
Tristram Bogart - Eric Katz, \emph{Obstructions to lifting tropical curves in
  surfaces in 3-space}, SIAM J. Discrete Math. 26, no. 3, 1050–1067. 14T05
  (52B20) (2012).

\bibitem{BS15}
Erwan Brugall\'e - Kristin Shaw, \emph{Obstructions to approximating tropical
  curves in surfaces via intersection theory}, Canad. J. Math. {67} no.~3
  (2015), 527--572.

\bibitem{BL98}
Michela Brundu - Alessandro Logar, \emph{Parametrization of the orbits of cubic
  surfaces}, Transformation Groups {3} no.~3 (1998), 209--239.

\bibitem{Ca}
Arthur Cayley, \emph{On the triple tangent planes of surfaces of the third
  order}, Cambridge and Dublin Math. J. {4} (1849), 118--138.

\bibitem{CuDe19}
Maria~Angelica Cueto - Anand Deopurkar, \emph{Anticanonical tropical cubic del
  Pezzos contain exactly 27 lines}, arXiv:1906.08196, 2019.

\bibitem{polymake:2000}
Ewgenij Gawrilow - Michael Joswig, \emph{{\tt polymake}: a framework for
  analyzing convex polytopes}, Polytopes---combinatorics and computation
  ({O}berwolfach, 1997), DMV Sem., vol.~29, Birkh\"auser, Basel, 2000,
  pp.~43--73.

\bibitem{Ge19}
Alheydis Geiger, \emph{Tropical Cubic Surfaces - Enumeration and Lines},
  Master's Thesis, University of T\"ubingen, to appear soon.

\bibitem{Ge18}
Alheydis Geiger, \emph{Realisability of infinite families of tropical lines on
  general smooth tropical cubic surfaces}, Master's Thesis, University of
  Warwick, 2018.

\bibitem{Ha14}
Simon Hampe, \emph{a-tint: A polymake extension for algorithmic tropical
  intersection theory}, European Journal of Combinatorics {36} (2014), 579 --
  607.

\bibitem{HJ17}
Simon Hampe - Michael Joswig, \emph{Tropical computations in {\tt polymake}},
  Algorithmic and experimental methods in algebra, geometry, and number theory,
  Springer, Cham, 2017, pp.~361--385.

\bibitem{JPS19}
Michael Joswig - Marta Panizzut - Bernd Sturmfels, \emph{The Schl\"{a}fli Fan},
  arXiv:1905.11951, 2019.

\bibitem{Ku19}
Avinash Kulkarni, \emph{GitHub, Brundu-Logar.m},
  \url{https://github.com/a-kulkarn/cubic-surfaces}, 2019-09-02.

\bibitem{MS15}
Diane Maclagan - Bernd Sturmfels, \emph{Introduction to Tropical Geometry},
  Providence, Rhode Island : American Mathematical Society, 2015.

\bibitem{PSS19}
Marta Panizzut - Emre~Can Sertöz - Bernd Sturmfels, \emph{An Octanomial Model
  for Cubic Surfaces}, arXiv:1908.06106, 2019.

\bibitem{PaVi19}
Marta Panizzut - Magnus~Dehli Vigeland, \emph{Tropical lines on cubic
  surfaces}, arXiv:math.AG/0708.3847v2, 2019.

\bibitem{Sal}
George Salmon, \emph{On the triple tangent planes to a surface of the third
  order}, Cambridge and Dublin Math. J. {4} (1849), 252--260.

\bibitem{Se19}
Emre Sertoz, \emph{GitHub, blowdown.mag},
  \url{https://github.com/emresertoz/pAdicCubicSurface}, 2019-09-08.

\bibitem{27questions}
Bernd Sturmfels - Kristian Ranestad, \emph{Twenty-seven questions about the
  cubic surface}, arXiv:1912.07347, 2019.

\bibitem{Vig07}
Magnus~Dehli Vigeland, \emph{Tropical lines on smooth tropical surfaces},
  arXiv:math.AG/0708.3847, 2007.

\bibitem{Vig10}
Magnus~Dehli Vigeland, \emph{Smooth tropical surfaces with infinitely many
  tropical lines}, Ark. Mat., Volume 48, Number 1, 177-206. 0 (2010).

\end{thebibliography}

\end{document}